\let\cl@chapter\relax \makeatother
\newtheorem{thm}{Theorem}%[section] %(If you want theorem numbered
\newtheorem{cor}{Corollary}%[section]
\newtheorem{prop}{Proposition}%[section]
\newtheorem{defi}{Definition}%[section]
\newtheorem{assum}{Assumption}
\newtheorem{lem}{Lemma}
\newtheorem{remark}{Remark}
\newcommand{\R}{\mathbb{R}}
\newcommand{\N}{\mathbb{N}}
\newcommand{\e}{\begin{equation}}
\newcommand{\ee}{\end{equation}}
\newcommand{\en}{\begin{equation*}}
\newcommand{\een}{\end{equation*}}
\newcommand{\eqn}{\begin{eqnarray}}
\newcommand{\eeqn}{\end{eqnarray}}
\newcommand{\bmat}{\begin{bmatrix}}
\newcommand{\emat}{\end{bmatrix}}
\newcommand{\vct}[1]{\boldsymbol{#1}}
\newcommand{\mtx}[1]{\boldsymbol{#1}}
\newcommand{\T}{\mathrm{T}}
\newcommand{\trace}{\operatorname{trace}}
\newcommand{\diag}{\operatorname{diag}}
\newcommand{\dist}{\operatorname{dist}}
\newcommand{\set}[1]{\mathbb{#1}}
\newcommand{\domain}{\operatorname{dom}}
\DeclareMathOperator*{\minimize}{\text{minimize}}
\DeclareMathOperator*{\argmin}{\text{arg~min}}
\newcommand{\eps}{\epsilon}
\newcommand{\calL}{\mathcal{L}}
\newcommand{\calP}{\mathcal{P}}
\newcommand{\vd}{\vct{d}}
\newcommand{\vp}{\vct{p}}
\newcommand{\vq}{\vct{q}}
\newcommand{\vs}{\vct{s}}
\newcommand{\vu}{\vct{u}}
\newcommand{\vv}{\vct{v}}
\newcommand{\vx}{\vct{x}}
\newcommand{\vy}{\vct{y}}
\newcommand{\vz}{\vct{z}}
\newcommand{\vzero}{\vct{0}}
\newcommand{\vone}{\vct{1}}
\newcommand{\mA}{\mtx{A}}
\newcommand{\mB}{\mtx{B}}
\newcommand{\mD}{\mtx{D}}
\newcommand{\mG}{\mtx{G}}
\newcommand{\mH}{\mtx{H}}
\newcommand{\mS}{\mtx{S}}
\newcommand{\mU}{\mtx{U}}
\newcommand{\mV}{\mtx{V}}
\newcommand{\mW}{\mtx{W}}
\newcommand{\mZ}{\mtx{Z}}
\newcommand{\mLambda}{\mtx{\Lambda}}
\newcommand{\mSigma}{\mtx{\Sigma}}
\newcommand{\mId}{{\bf I}}
\newcommand{\mzero}{{\bf 0}}
\newcommand{\setD}{\set{D}}
\newcommand{\setG}{\set{G}}
\newcommand{\setH}{\set{H}}
\newcommand{\setI}{\set{I}}
\newcommand{\setL}{\set{L}}
\newcommand{\setN}{\set{N}}
\newcommand{\setS}{\set{S}}
\newcommand{\setV}{\set{V}}
\newcommand{\setX}{\set{X}}
\newcommand{\setY}{\set{Y}}
\newcommand{\wt}{\widetilde}
\newcommand{\norm}[2]{\left\| #1 \right\|_{#2}}
\newcommand{\parans}[1]{\left(#1\right)}
\newlength{\imgwidth}
\newcommand{\twoCol}[2]{\ifthenelse{\boolean{twoColVersion}} {#1} {#2} }
\providecommand{\keywords}[1]{\textbf{\textit{Keywords:}} #1}
\title{Convergence Analysis of Alternating  Projection Method for Nonconvex Sets}
\author{Zhihui Zhu\thanks{Z. Zhu is with the Center for Imaging Science, Johns Hopkins University, Baltimore, MD, USA (email:zzhu29@jhu.edu)}
~ and  Xiao Li \thanks{Xiao Li is with the Department
	of Electronic Engineering, The Chinese University of Hong Kong, Shatin, NT, Hong Kong (e-mail: xli@ee.cuhk.edu.hk)  }% <-this % stops a space
}
\begin{document}

\maketitle

\begin{abstract}
	Alternating projection method {has been used in a wide range of engineering applications} since it is a gradient-free method (without requiring tuning the step size) and  usually has fast speed of convergence.   In this paper, we formalize two properties of proper, lower semi-continuous and semi-algebraic sets: the three-point property for all possible iterates and the local contraction property that serves as the non-expensiveness property of the projector, but only for the iterates that are close enough to each other. Then by exploiting the geometric properties of the objective function around its critical point, i.e. the Kurdyka-\L{ojasiewicz} (KL) property, we establish a new convergence analysis framework to show that if one set satisfies the three-point property and the other one obeys the local contraction property, the iterates generated by alternating projection method is a convergent sequence and converges to a critical point.  We complete this study by providing convergence rate which depends on the explicit expression of the KL exponent. As a byproduct, we use our new analysis framework to recover the linear convergence rate of alternating projection method onto closed convex sets. To illustrate the power of our new  framework, we provide  new  convergence result for a class of concrete applications: alternating projection method for designing structured tight frames that are widely used in sparse representation, compressed sensing and communication.
\end{abstract}

\keywords{
	alternating projection method, convergence, Kurdyka-\L{ojasiewicz} inequality, KL exponent, iterates sequence convergence, convergence rate.
}

\section{Introduction}
We consider the problem of finding the minimum Euclidean distance between two sets:
\e
\minimize_{\vx\in\setX,\vy\in\setY} \ g(\vx,\vy) = \|\vx - \vy\|_2^2,
\label{eq:min x-y}\ee
where $\setX$ and $\setY$ are two nonempty closed subsets of $\R^n$ and are possibly nonconvex. A simple but popular approach for solving \eqref{eq:min x-y} is the {\em alternating projection method}  which alternatingly projects the iterates onto the sets $\setX$ and $\setY$:
\e\begin{split}
	&\vx_{k+1} \in \arg\min_{\vx\in\setX} \ g(\vx,\vy_k) = \calP_{\setX}(\vy_k),\\
	&\vy_{k+1} \in \arg\min_{\vy\in\setY} \ g(\vx_{k+1},\vy) = \calP_{\setY}(\vx_{k+1}).
\end{split}\label{eq:apm}\ee
Here for a closed subset $\setV\in\R^n$, $\calP_{\setV}(\cdot)$ represents the orthogonal projection onto $\setV$, that is, $
\calP_{\setV}(\vu):=\argmin_{\vv\in\setV}\|\vv - \vu\|_2^2.$ In case there exist more than one choice for $\vx_{k+1}$ (or $\vy_{k+1}$) in \eqref{eq:apm}, we pick any of them. The alternating projection method for solving \eqref{eq:min x-y} is depicted in Algorithm \ref{alg:APM}.
\begin{algorithm}[htb]
	\caption{Alternating Projection Method}% with considering projection noise}
	\label{alg:APM}
	{\bf Input:}  initialization $y_0\in\setY$, maximal iteration number: $maxIter$ and tolerance: $tol$\\
	{\bf Set:} $k=0$
	\begin{algorithmic}[1]
		\WHILE{$k\leq maxIter$ and $\|\vx_{k} - \vy_{k}\|_2> tol$}
		\STATE $\vx_{k+1} \in \calP_{\setX}(\vy_k)$
		\STATE $\vy_{k+1} \in \calP_{\setY}(\vx_{k+1})$
		\STATE $k\leftarrow k+1$
		\ENDWHILE
	\end{algorithmic}
	%	{\bf Output}:  $\vx_{maxIter}, \vy_{maxIter}$.
\end{algorithm}

Alternating projection method has been widely utilized for solving practical problems provided an efficient way for solving \eqref{eq:apm}.  Compared with gradient-based local search algorithms (such as {projected gradient descent}), the alternating projection method is step-size free and has faster \emph{empirical} convergence speed. Choosing an appropriate step-size is one of the major challenges in gradient-based optimization algorithms. It is easy to implement alternating projection method for many practical applications due to the fact that there is no need to tune the step-size and we only require to solve \eqref{eq:apm} which admits a closed-form solution for many cases. %{On the other hand, like alternating minimization, alternating projection method may fail to converge when applied to very general nonconvex sets \cite{bauschke2014local}.} 

Alternating projection has been widely applied for \emph{convex feasibility problem}; see \cite{bauschke1996projection} for a comprehensive view. In the area of \emph{image restoration}, Youla et al. \cite{youla1982image} estimated the image from its incomplete observation by recursively computing  projections onto closed convex sets and provided theoretical convergence analysis if the underlying ground truth image lies in the intersection of these convex sets; this was further extended in \cite{combettes1997convex} where the revised alternating projection method allows parallel computing and inexact projection at each step.   In \emph{signal processing and inverse problem}, Bauschke et al. \cite{bauschke2002phase} formulated the classical phase retrieval problem into the minimum Euclidean distance framework \eqref{eq:min x-y}, and Byrne \cite{byrne2003unified}  presented a unified treatment for many iterative algorithms in signal processing and inverse problem via an alternating projection perspective. We refer the readers to \cite{escalante2011alternating} and the references therein for many other applications involving alternating projection method.

Although the alternating projection method has been known to work surprisingly well in practice, it remains an active research area to fully understand the theoretical foundation of this phenomenon, especially the convergence behaviors for these methods. {Indeed, for general nonconvex sets, it has been proved that alternating projection method may fail to converge and start to cycle \cite{bauschke2013cluster,bauschke2014local}. Our main interest is the result guaranteeing convergence---the sequence of  iterates is convergent and its limit point satisfies certain optimality conditions---for alternating minimization method.}

\subsection{Previous related work}
The alternating projection method has long history which can be traced back to John Von Neumann \cite{von1950functional}, where the alternating projection between two closed subspaces of a Hilbert space is guaranteed to globally converge to a intersection point of the two subspaces, if they intersect non-trivially. Aronszain \cite{aronszajn1950theory} proved that the rate of  convergence is linear depending on the principal angle between the two subspaces.  Bregman \cite{bregman1965method} extended the alternating projection onto subspaces to projection onto closed convex sets (POCS) with almost similar convergence guarantee. The convergence rate of POCS is known to be linear if the relative interiors of the two convex sets intersect to each other \cite{bauschke1993convergence}.  See \cite{bauschke1996projection} for a comprehensive survey on POCS. Alternating projection method has also been widely utilized when the sets do not intersect. It has been pointed out in \cite{boyd2003alternating} that the sequence generated by the alternating projection method is convergent and converges to a pair of points in $\setX$ and $\setY$ that have Euclidean minimum distance when the two sets are closed convex sets.

Unlike alternating projection between convex sets, the theoretical results for alternating projection method with nonconvex sets are limited.  Tropp et al.~\cite{tropp2005designing} have applied the theorem of Meyer \cite{meyer1976sufficient} to obtain subsequence convergence results for alternating projection method when utilized for a class of nonconvex sets onto which the orthogonal projection is unique. Certain properties of the nonconvex sets have been imposed to obtain stronger convergence results. Lewis et al. \cite{lewis2009local} utilized the notion of  regularity of the intersection between the two sets. In particular, if the two sets have linear regular intersection and at least one set is super-regular at a common point in the intersection area, the alternating projection algorithm is proved to converge  to this common point at a linear rate provided that the algorithm is initialized at a point that is close enough to this common point \cite{lewis2009local}.   Recently, Drusvyatskiy et al. \cite{drusvyatskiy2015transversality} proved that if the two sets intersect transversally at a common point and \Cref{alg:APM} starts with a point close enough to this common point, then the alternating projection algorithm converges linearly to this common point without  the assumption that one set is super-regular at the common point. {Noll et al. \cite{noll2016local} proved convergence of alternating projection method if the two sets intersect each other and one of the sets is H\"{o}lder regular with respect to the other. They also provided convergence rate based on the component of H\"{o}lder regularity. One common assumption in \cite{lewis2009local,drusvyatskiy2015transversality,noll2016local}  is that  the  two nonconvex sets $\setX, \setY$ intersect with each other, i.e., $\setX\cap\setY \neq \emptyset$.  }

{The Douglas-Rachford  algorithm, slightly different to \Cref{alg:APM}, has also been used to solve the feasibility problem  since its introduction in \cite{douglas1956numerical}. Hesse and Luke \cite{hesse2013nonconvex} proved local convergence of the scheme for an affine subspace transversally intersecting another set which can be nonconvex but is required to satisfy a regularity hypothesis called superregularity. Bauschke and Noll \cite{bauschke2014local} provided convergence guarantee of Douglas-Rachford  algorithm when the sets are finite unions of convex sets. They also proved that the Douglas-Rachford  scheme may fail to converge for general nonconvex sets.} 

We finally mention another closely related recent works in proximal algorithms including proximal alternating minimization and projection method~\cite{attouch2010proximal} and proximal alternating linearlized minimization~\cite{bolte2014proximal}. Under the assumption that the objective function satisfies the so-called Kurdyka-\L{ojasiewicz} (KL) inequality \cite{lojasiewicz1963propriete,kurdyka1998gradients}, the convergence of the iterates sequence generated by the proximal alternating algorithms was established in \cite{bolte2007lojasiewicz, bolte2007clarke,attouch2010proximal,bolte2014proximal} for general nonsmooth optimization that is not required to be convex. As pointed out by Bolte et al. \cite{bolte2007lojasiewicz, bolte2007clarke}, the KL inequality is quite universal in the sense that if a function is proper, lower semi-continuous and semi-algebraic or sub-analytical, the function satisfies the KL inequality at any point in its effective domain; see also \cite[Theorem 5.1]{bolte2014proximal}. The KL property is proved to be very useful  for analyzing the convergence behavior of proximal type algorithms solving general nonsmooth and nonconvex problems \cite{attouch2009convergence,attouch2010proximal,attouch2013convergence,bolte2014proximal}. {For example, the KL property has  been utilized to address the  convergence issue of the proximal alternating projection method for general nonconvex sets $\setX$ and $\setY$ in\cite{attouch2010proximal}: with a proximal regularizer, $\vx_k$ and $\vy_k$ are updated respectively by $\vx_{k+1} \in \argmin_{\vx\in\setX}g(\vx,\vy_k) + \eta_{\vx} \|\vx - \vx_{k}\|_2^2 = \calP_{\setX}(\vy_k + \eta_{\vx} \vx_k)$ and $\vy_{k+1} \in \argmin_{\vy\in\setY}g(\vx_{k+1},\vy) + \eta_{\vy} \|\vy - \vy_{k}\|_2^2 = \calP_{\setX}(\vx_{k+1} + \eta_{\vy} \vy_k)$ with $\eta_{\vx},\eta_{\vy}>0$ rather than as in Algorithm~\ref{alg:APM}. The proximal regularizers $\eta_{\vx} \|\vx - \vx_{k}\|_2^2$ and $\eta_{\vy} \|\vy - \vy_{k}\|_2^2$ ensure the convergence of the corresponding algorithm. However, Algorithm~\ref{alg:APM} is widely utilized for practical applications as it is a very simple algorithm and decreases the objective function $g$ in \eqref{eq:min x-y} the most in each step. Thus, we stress out that our main interest is to provide convergence analysis for the alternating projection method, rather than providing new algorithms for solving \eqref{eq:min x-y}. %In particular, the sequence convergence result for Algorithm~\ref{alg:APM} under certain conditions on the sets $\setX$ and $\setY$ (see \Cref{thm:main convergence}) provides theoretical guarantees for the practical utilization of the naive or classical alternating projection method.
}

\subsection{Outline and our contributions}
\label{sec:main contributions}

{
	In this paper, we provide new convergence results for the alternating projection method (i.e, \Cref{alg:APM}) when applied for nonconvex sets $\setX$ and $\setY$. Our main contributions are briefly summarized below immediately followed by detailed descriptions. 
	\begin{itemize}
		
		\item In \Cref{sec:main result}, we prove that the sequence of iterates generated by the alternating projection method is convergent and converges to a critical point of \eqref{eq:min x-y} if the sets satisfy the three-point property and the local contraction property (see \Cref{assum}). 
		\item %To illustrate the power of our convergence analysis framework, we give new convergence results for a class of concrete applications: designing structured tight frames via  \Cref{alg:APM} \cite{tropp2005designing}.  
		As stylized applications of our convergence analysis framework, in Section~\ref{sec:frame design}, we provide sequence convergence that improves upon the previous subsequence convergence result in \cite{tropp2005designing} for designing structured tight frames via alternating projection method.
	\end{itemize}
}

We first emphasize that designing structured tight frames is central to several engineering applications.   For example, equiangular tight frame is a natural choice for sparsely representing signals as it has lower mutual coherence and thus has been extensively utilized in sparse representation and sensing matrix design for compressed sensing system \cite{yaghoobi2009parametric,tropp2005designing,li2013projection,li2015designing,elad2007optimized,tsiligianni2014construction}.  Also designing tight frames with prescribed column norm is crucial for direct sequence-code division multiple access (DS-CDMA)  in communication \cite{tropp2005designing} as it is directly related to the construction of the optimal signature sequences.

The underpinning fact from which the new result is established is the utilization of the three-point property to guarantee the asymptotic regular property of the sequence of iterates  in terms of \emph{one variable} and the local contraction property to ensure similar asymptotic regular property of the sequence of iterates in terms of the other variable. The sequence convergence property is then obtained by exploiting the KL property of the objective function. We complete this result by the study of the convergence rate which depends on the explicit expression for the KL exponent characterizing the geometrical properties of the problem around its critical points. {The standard convergence analysis framework based on KL inequality is established in \cite{absil2005convergence,attouch2009convergence,attouch2010proximal,bolte2014proximal}. According to the comments above \cite[Theorem 1]{attouch2009convergence}, this framework originally appears in \cite{lojasiewicz1963propriete}. We will utilize this analysis framework with slight variation  to show the convergence of the method of alternating projection. Our analysis framework and its difference with the standard one are described in \Cref{sec:proof highlight}.   }

{Unlike the convergence results in \cite{lewis2009local,drusvyatskiy2015transversality,noll2016local} that require the two sets intersect each other and an initialization that is within (or near) the intersection area, our result can be applied to any two sets that have an empty intersection.} Checking if the two sets intersect each other is non-trivial; it is even harder to find such a proper initialization that is close enough to the intersection area. Also, as the examples given in Section \ref{sec:frame design}, it is common that the two sets do not intersect each other and the goal is to find a pair of points that have minimum distance.

As the subspaces and closed convex sets automatically satisfy the three-point property and the local contraction property, our results cover the sequence convergence result (with linear rate convergence) for alternating projection onto subspaces and closed convex sets~\cite{boyd2003alternating}. However, our proof technique differs to \cite{boyd2003alternating} in that we exploit the geometric properties of the objective function around its critical points, i.e, the KL property which enables us to apply our results to general closed nonconvex, semi-algebraic sets.% that obey the three-point property and the local contraction property. 

\section{Convergence analysis for the alternating projection method}
\label{sec:main result}

We start with some improtant definitions.
\begin{defi}\cite{bolte2014proximal}Let $h:\R^d\rightarrow (-\infty,\infty]$ be a proper and lower semi-continuous function, whose domain is defined as $
	\domain h:=\left\{\vu\in\R^d:h(\vu)<\infty\right\}.$
	The Fr\'{e}chet subdifferential $\widehat\partial h$ of $h$ at $\vu\in \domain h$ is defined by
	\[
	\widehat\partial h(\vu) = \left\{\vz:\lim_{\vv\rightarrow \vu,\vv\neq  \vu}\inf \frac{h(\vv) - h(\vu) - \langle \vz, \vv - \vu\rangle}{\|\vu - \vv\|}\geq 0\right\}.
	\]
	$\widehat\partial h(\vu) = \emptyset$ if $\vu\notin \domain h$. The limiting subdifferential $\partial h(\vu)$  is defined as follows
	\[
	\partial h(\vu) = \left\{
	\vz:\exists \vu_k\rightarrow \vu, h(\vu_k)\rightarrow h(\vu), \vz_k\in \widehat\partial h(\vu_k)\rightarrow \vz
	\right\}.
	\]
\end{defi}
We say $\overline{\vu}$ a limiting critical point of $h$ if it satisfies the first-order optimality condition ${\bf 0} \in \partial h(\overline{\vu})$. Throughout the paper, when it is clear from the context, we omit the word ``limiting'' and just call $\partial h(\vu)$ and $\overline\vu$ as the subdifferential and critical point of $h$, respectively. The following KL property characterizes the local geometric properties of the objective function around its critical points and is proved to be pretty useful for convergence analysis \cite{attouch2009convergence,attouch2010proximal,attouch2013convergence,bolte2014proximal}.

\begin{defi}\cite{attouch2009convergence}\label{def:KL}
	A proper semi-continuous function $h(\vu)$ is said to satisfy the Kurdyka-\L{ojasiewicz} (KL) property, if for any critical point $\overline{\vu}$ of $h(\vu)$, there exist $\delta>0,~\eta>0,~\theta\in[0,1),~C_1>0$ such that (where $\theta$ is often referred to as the KL exponent)
	for all
	\[
	\vu \in B(\overline{\vu}, \delta) \cap \{\vu: h(\overline \vu) <  h( \vu) < h(\overline \vu) + \eta\}
	\]
	we have
	\[
	\left|h(\vu) - h(\overline{\vu})\right|^{\theta} \leq C_1 \dist(0, \partial h(\vu)).
	\]
\end{defi}

We then give out the main assumption we made in this paper to show the convergence of alternating projection method.

%\begin{assum}\label{assum}
%	Let $\setX$ and $\setY$ be  two closed semi-algebraic sets, and let $\{(\vx_k,\vy_k)\}$ be the sequence of iterates generated by the alternating projection method (i.e., Algorithm \ref{alg:APM}). Assume the sequence $\{(\vx_k,\vy_k)\}$ is bounded and the sets $\setX$ and $\setY$ obey the following properties: \notexli{we may change the notion three-point property to partial sufficient decrease}.
%	\begin{enumerate}[(i)]
%		\item three-point property of $\setY$: there exists a nonnegative function $\delta_\alpha:\setY\times\setY\rightarrow \R$ with $\alpha>0$ such that $\delta_\alpha(\vy_k,\vy_{k-1})\geq \alpha \|\vy_k-\vy_{k-1}\|_2^2$ and
%		\e
%		\delta_\alpha(\vy_{k-1},\vy_k) + g(\vx_k,\vy_k)\leq g(\vx_k,\vy_{k-1}), \ \forall \ k\geq 1;
%		\label{eq:3point property}\ee
%		\item local contraction property of $\setX$: there exist $\eps>0$ and $\beta>0$ such that when $\|\vy_k-\vy_{k-1}\|_2\leq \eps$, we have
%		\e \label{eq: local contraction}
%		\|\vx_{k+1} - \vx_{k}\| = \| \calP_{\setX} (\vy_k) - \calP_{\setX}(\vy_{k-1}) \|_2 \leq \beta \|\vy_k - \vy_{k-1}\|_2.
%		\ee
%	\end{enumerate}
%\end{assum}

{
	
	\begin{assum}\label{assum}
		Let $\setX$ and $\setY$ be any two closed semi-algebraic sets, and let $\{(\vx_k,\vy_k)\}$ be the sequence of iterates generated by the alternating projection method (i.e., Algorithm \ref{alg:APM}). Assume that the sequence $\{(\vx_k,\vy_k)\}$ is bounded and there exist subsets $\overline\setX\subset \setX$ and $\overline\setY\subset \setY$ and $k_0\in\N$ such that $\vx_k\in \overline\setX$ and $\vy_k\in\overline\setY$ for all $k\geq k_0$. Furthermore, we assume that the sets $\setX, \setY$ and subsets $\overline\setX, \overline\setY$ obey the following properties:
		\begin{enumerate}[(i)]
			\item three-point property: there exists a nonnegative function $\delta_\alpha:\setY\times\setY\rightarrow \R$ with $\alpha>0$ such that $(i)$ for all $\vy,\vy'\in \setY$ we have $\delta_\alpha(\vy,\vy')\geq \alpha \|\vy-\vy'\|_2$  and $(ii)$  for all $\vy\in \overline\setY, \widetilde\vx\in\overline\setX, \widetilde\vy \in \argmin_{\vy'\in\setY} g(\widetilde\vx,\vy')$, we have 
			\e
			\delta_\alpha(\vy,\widetilde\vy) + g(\widetilde\vx,\widetilde\vy)\leq g(\widetilde\vx,\vy);
			\label{eq:3point property}\ee
			\item local contraction property of $\setX$ with respect to $\overline\setY$:  there exist $\eps>0$ and $\beta>0$ such that
			\e \label{eq: local contraction}
			\| \calP_{\setX} (\widetilde\vy) - \calP_{\setX}(\vy) \|_2 \leq \beta \|\widetilde\vy - \vy\|_2, \ \forall \ \vy,\widetilde\vy\in \overline\setY, \|\widetilde\vy-\vy\|_2\leq \eps.
			\ee
		\end{enumerate}
	\end{assum}
}

{
	\begin{remark}
		%	One may notice that aside from properties in \eqref{eq:3point property} and \eqref{eq: local contraction} about the sets, \Cref{assum} also requires $\vx_k\in \overline\setX$ and $\vy_k\in\overline\setY$. 
		We first explain the reason that we impose the assumption $\vx_k\in \overline\setX$ and $\vy_k\in\overline\setY$. The reason is that for some nonconvex sets (like the unit sphere (i.e., the boundary of the unit ball) in the next remark), the three-point property \eqref{eq:3point property} and the local contraction property \eqref{eq: local contraction} may only hold with respect to their subsets, but not the entire sets. Thus, with the assumption $\vx_k\in \overline\setX$ and $\vy_k\in\overline\setY$, we only require the properties \eqref{eq:3point property} and  \eqref{eq: local contraction} hold for the points in the subsets. As will been seen in \Cref{sec:frame design}, this allows us to provide improved convergence guarantee for alternating projection method in designing structured tight frames. 
\end{remark}}
\begin{remark}
	This three-point property~\eqref{eq:3point property} along with a so-called four-point property has been widely utilized for proving the convergence of the sequence $\{g(\vx_k,\vy_k)\}$ (rather than the iterates $\{(\vx_k,\vy_k)\}$) generated by alternating minimization~\cite{csisz1984information,niesen2009adaptive}. As we consider the convergence of the iterates, the $\delta_\alpha$ function in \eqref{eq:3point property} is slightly stronger than the one in~\cite{csisz1984information,niesen2009adaptive}, where the function $\delta_\alpha$ is only required to be positive, i.e, $\delta_\alpha(\vy,\vy')>0$ for all $\vy,\vy'\in\setY$ and $\vy\neq \vy'$. {In particular, as will be seen soon in \Cref{sec:proof highlight}, the requirement that $\delta_\alpha(\vy,\vy')\geq \alpha \|\vy-\vy'\|_2$ provides (partial) sufficient decrease property which is central to the following convergence analysis. Similar three-point property has also been used in \cite{noll2016local} for convergence analysis where the authors proved that such property holds for general nonconvex sets that intersect each other and one of the sets is H\"{o}lder regular with respect to the other.
		Before going to the details of convergence analysis, we first give out examples satisfy the three-point property and the local contraction property.  
	}
	
	We note that the three-point property~\eqref{eq:3point property} mostly characterizes a certain property regarding the set $\setY$. A typical example satisfying this three-point property~\eqref{eq:3point property} is a convex and closed set $\setY$ which obeys \eqref{eq:3point property} for any $\vy\in\setY,\widetilde\vx\in\R^n, \widetilde\vy \in \argmin_{\vy'\in\setY} g(\widetilde\vx,\vy')$ with $\delta_\alpha(\vy,\widetilde\vy) = \|\vy -\widetilde\vy\|_2^2$ since 
	\e\begin{split}
		&g(\widetilde\vx,\vy) - g(\widetilde\vx,\widetilde\vy)  =  \| \widetilde\vx- \vy \|_2^2 - \|\widetilde\vx - \widetilde\vy\|_2^2\\&= \|\widetilde \vx - \widetilde\vy + \widetilde\vy - \vy \|_2^2 - \|\widetilde\vx - \widetilde\vy\|_2^2\\
		& =\|\widetilde\vy - \vy \|_2^2 + 2\langle \widetilde\vx - \widetilde\vy,  \widetilde\vy - \vy  \rangle  \geq \|\widetilde\vy - \vy \|_2^2,
	\end{split}\label{eq:3point convex set}\ee
	where the last inequality follows from the fact that $\setY$ is a closed convex set such that
	\e
	\langle \widetilde\vx - \widetilde\vy,  \widetilde\vy - \vy'  \rangle \geq 0,  \ \forall \  \vy'\in \setY.
	\label{eq:projection convex propety}\ee
	Another example is the unit sphere $\setY=\left\{\vy\in\R^n:\|\vy\|_2 = 1\right\}$ which satisfies \eqref{eq:3point property} for any $\widetilde\vx$ that is not zero.  In particular, for any $\vx\in\R^n$, its projection onto $\setY$ is defined as
	\e
	\calP_{\setY}(\vx) = \left\{\begin{matrix}\frac{\vx}{\|\vx\|_2}, & \vx\neq \vzero,\\ \vu, & \vx = \vzero,\end{matrix}\right.
	\label{eq:proj unit sphere}\ee
	where $\vu$ represents an arbitrary unit vector. Now by defining $\widetilde\vy \in \calP_{\setY}(\widetilde\vx)$, for any $\vy\in\setY$, we have
	\e\begin{split}
		&	\| \widetilde\vx - \vy\|_2^2 - \| \widetilde\vx - \widetilde\vy\|_2^2 = 2\widetilde\vx^\T\widetilde\vy -2\widetilde\vx^\T\vy  = \|\widetilde\vx\|_2 \left(2\widetilde\vy^\T\widetilde\vy - 2 \widetilde\vy^\T \vy \right)\\
		& = \|\widetilde\vx\|_2 \left(\widetilde\vy^\T\widetilde\vy - 2 \widetilde\vy^\T \vy + \|\widetilde\vy\|_2^2\right)=\|\widetilde\vx\|_2\|\widetilde\vy - \vy\|_2^2,
	\end{split}\label{eq:3point unit sphere}\ee
	where the first line utilizes $\widetilde\vy \in \calP_{\setY}(\widetilde\vx)$ and the second line follows from $\|\widetilde\vy\|_2 = \|\vy\|_2 = 1$. It is clear from \eqref{eq:3point unit sphere} that the three-point property \eqref{eq:3point property} holds for all $\vx$ that is away from zero. %\revise{Noll and Rondepierre \cite{noll2016local} proved that the three-point property holds for general nonconvex sets that intersect each other and one of the sets is H\"{o}lder regular with respect to the other.}
	
	%With this example, we stress that the three-point property \eqref{eq:3point property} of $\setY$ is only required to hold for all possible iterates $\vx_k$ rather than for any $\vx\in\setX$.
	
\end{remark}

\begin{remark}
	The local contraction property in \eqref{eq: local contraction} is mild and it basically requires the projections of $\widetilde\vy$ and $\vy$ onto $\setX$ are not far away when $\vy$ is close enough to $\widetilde\vy$. This property is expected to hold if we want to guarantee the convergence of the alternating projection method. A typical class satisfying this  local contraction property  \eqref{eq: local contraction} is a closed convex set $\setX$ with $\beta = 1$ and $\eps$ be arbitrary positive number in \eqref{eq: local contraction}:
	\e
	\| \calP_{\setX} (\widetilde\vy) - \calP_{\setX}(\vy) \|_2 \leq \| \widetilde\vy - \vy\|_2
	\label{eq:local conctrac convex set}\ee
	for arbitrary $\widetilde\vy, \vy \in \R^n$ (not only the algorithm trajectory). \eqref{eq:local conctrac convex set} is also known as the non-expensiveness property of orthogonal projection onto closed convex sets. {Another example satisfying this local contraction property  \eqref{eq: local contraction} is the unit sphere $\setX =\left\{\vx\in\R^n:\|\vx\|_2 = 1\right\}$ for nonzero vectors. Specifically, for any constant $c>0$, denote by $\overline\setY = \{\vy\in\R^n: \|\vy\|_2 \ge c\}$ the set of vectors that excludes the origin. Then, for any $\vy,\wt \vy\in \overline\setY$, we have
		\begin{align*}
		\norm{\calP_{\setX} (\widetilde\vy) - \calP_{\setX}(\vy)}{2} &= \norm{\frac{\vy}{\norm{\vy}{2}} - \frac{\wt\vy}{\norm{\wt\vy}{2}} }{2}	 = \frac{1}{{\norm{\vy}{2} \norm{\wt\vy}{2}}} \Big\|\norm{\wt\vy}{2}\vy -  \norm{\vy}{2}\wt \vy  \Big\|_2\\
		& = \frac{1}{{\norm{\vy}{2} \norm{\wt\vy}{2}}}\Big\|\norm{\wt\vy}{2}\vy - \norm{\wt\vy}{2}\wt\vy + \norm{\wt\vy}{2}\wt\vy - \norm{\vy}{2}\wt \vy\Big\|_2\\
		& \le  \frac{1}{{\norm{\vy}{2} \norm{\wt\vy}{2}}}\parans{\norm{\wt\vy}{2} \norm{\vy - \wt\vy}{2} + \norm{\wt\vy}{2} \big | \norm{\wt\vy}{2} - \norm{\vy}{2}\big | }\\
		& \le \frac{2}{\norm{\vy}{2}} \norm{\vy - \wt\vy}{2} \le \frac{2}{c} \norm{\vy - \wt\vy}{2}.
		\end{align*}
		Thus, the local contraction property \eqref{eq: local contraction} is satisfied with $\beta = \frac{2}{c}$ and arbitrary positive $\epsilon$.
		The other examples including the set of tight frames are presented in Section~\ref{sec:frame design}.
	}

\end{remark}

{
	\subsection{Consequence of \Cref{assum} and proof highlight}
	\label{sec:proof highlight}
	Let $\{(\vx_k,\vy_k)\}$ be the sequence of iterates generated by the alternating projection method. As a consequence of \Cref{assum} and particularly \eqref{eq:3point property}, we have
	\e
	\delta_\alpha(\vy_{k-1},\vy_k) + g(\vx_k,\vy_k)\leq g(\vx_k,\vy_{k-1}), \ \forall \ k\geq k_0+1,
	\label{eq:3point property intro}\ee
	which provides sufficient decrease property with respect to $\{\vy_k\}$. To give out the proof highlight, we transfer the constrained problem into the following equivalent form without any constraints:
	\e
	f(\vx,\vy) = g(\vx,\vy) + \delta_{\setX}(\vx) + \delta_{\setY}(\vy),
	\label{eq:unconstraind}\ee
	where $\delta_{\setX}(\vx) = \left\{ \begin{matrix} 0, & \vx\in\setX \\ \infty, & \vx\notin \setX\end{matrix}\right\}$ (and $\delta_{\setY}$) is the indicator function of the set $\setX$ (and $\setY$). We now give some insights into our proof strategy.
	\begin{enumerate}[(i)]
		\item (partial) sufficient decrease property: by using \eqref{eq:3point property intro} we obtain that
		\e
		g(\vx_{k-1},\vy_{k-1}) - g(\vx_k,\vy_{k}) \geq \alpha\|\vy_{k-1} - \vy_k\|_2^2, \ \forall k \ge k_0+1
		\label{eq:sufficient decreae intro}\ee
		which guarantees the  asymptotic regular property of $\{\vy_k\}$, i.e., $\lim_{k\rightarrow\infty}\|\vy_k - \vy_{k-1}\|_2 =0$. This together with the local contraction property \eqref{eq: local contraction} gives the  asymptotic regular property of $\{\vx_k\}$;
		\item safeguard property: find a positive constant $c>0$ and construct $\vd_k \in \partial f(\vx_k,\vy_k)$ such that
		\[
		\|\vd_k\|_2 \leq c \|\vy_{k} - \vy_{k-1}\|_2;
		\]
		\item KL property: show that the sequence $\{(\vx_k,\vy_k)\}$ is a {\em Cauchy sequence}.
	\end{enumerate}
	We note that the first two requirements are slightly different from the standard ones that are shared by most descent algorithms~\cite{attouch2009convergence,attouch2010proximal,bolte2014proximal}. We use the first requirement as an example to illustrate the difference. As pointed out in \cite{attouch2009convergence,attouch2010proximal,bolte2014proximal}, the standard sufficient decrease property has the form
	\e
	g(\vx_{k-1},\vy_{k-1}) - g(\vx_k,\vy_{k}) \geq \alpha\left(\|\vy_{k-1} - \vy_k\|_2^2 + \|\vx_{k-1} - \vx_k\|_2^2\right),
	\label{eq:sufficient decreae intro classical}\ee
	which is stronger than \eqref{eq:sufficient decreae intro}. The partial sufficient decrease property in \eqref{eq:sufficient decreae intro} that depends on the iterates gap of only one variable provides us the freedom to put different requirements on the two sets. %Typical examples for $\setY$ satisfying the three-point property (and hence \eqref{eq:sufficient decreae intro}) include convex sets and unit sphere ({i.e., the boundary of unit ball}). The assumption of the local contraction property (see \eqref{eq: local contraction}) on the set $\setX$ is  mild as it basically requires that $\| \calP_{\setX} (\vy_k) - \calP_{\setX}(\vy_{k-1}) \|_2$ is small when $\|\vy_{k} - \vy_{k-1}\|_2$ converges to 0. On the other hand, the classical sufficient decrease property \eqref{eq:sufficient decreae intro classical} depends on the iterates gap of both variables and thus adds similar requirement on both sets.
}

\subsection{Convergence to a critical value}

%We transfer the constrained problem into the following equivalent form without any constraints: \e f(\vx,\vy) = g(\vx,\vy) + \delta_{\setX}(\vx) + \delta_{\setY}(\vy), \label{eq:unconstraind}\ee where $\delta_{\setX}(\vx) = \left\{ \begin{matrix} 0, & \vx\in\setX \\ \infty, & \vx\notin \setX\end{matrix}\right.$ (and $\delta_{\setY}$) is the indicator function of the set $\setX$ (and $\setY$).

To simplify the notation, we stack $\vx$ and $\vy$ into one variable as $\vz = (\vx,\vy)$.  Let $\{(\vx_k,\vy_k)\}$ be the sequence of iterates generated by the alternating projection method. %\revise{Since our convergence analysis mainly characterizes the behavior of the iterate $\vz_k$ when $k$ is large enough, without loss of generality, we simply assume $k_0 = 0$ in \Cref{assum} throughout this section. Thus, \Cref{assum} implies that $\vx_k \in \overline\setX, \vy_k \in \overline\setY$ and there exists an $\alpha>0$ such that
%	\e \alpha\|\vy_{k-1} - \vy_k\|_2 + g(\vx_k,\vy_k)\leq g(\vx_k,\vy_{k-1}), \ \forall \ k\geq 1. \label{eq:3point property}\ee }
With \Cref{assum}, we begin by showing the convergence of $\{f(\vz_k)\}$ and that the sequence $\{\vz_k\}$ is regular (i.e., $\lim_{k\rightarrow\infty} \|\vz_k - \vz_{k-1}\|_2 = 0$) in the following result.

\begin{lem}\label{lem:sufficient decrese} Under \Cref{assum}, we have the following assertions.
	\begin{enumerate}[(i)]
		\item The sequence $\{f(\vz_{k})\}$ is monotonically decreasing and convergent.
		\item We have
		\begin{align}
		&f(\vz_{k-1}) - f(\vz_k) \geq \alpha\|\vy_{k-1} - \vy_k\|_2^2, \ \forall \ k\ge k_0+1,
		\label{eq:sufficient decrease}\\
		&\lim_{k\rightarrow\infty} \|\vy_k - \vy_{k-1}\|_2 = 0,~~	\lim_{k\rightarrow\infty} \|\vx_k - \vx_{k-1}\|_2 = 0,
		\label{eq:y difference converges}\end{align}
		{where $\alpha >0$ is defined in the three-point property \eqref{eq:3point property}.}

		%		\item The sequence $\{\vz_k= (\vx_k,\vy_k)\}$ is bounded.		
		\item Denote by
		$\vd_k = (2(\vy_{k-1} - \vy_{k}), \vzero)$ for all $k\geq 1$. Then
		\e
		\vd_k \in \partial f(\vx_{k},\vy_k).
		\label{eq:subdifferential}\ee
	\end{enumerate}
\end{lem}

\begin{proof}[Proof of \Cref{lem:sufficient decrese}] Show $(i)$: {Since by \Cref{assum} that $\vx_0\in\setX,\vy_0\in\setY$ and both $x_0$ and $y_0$ are bounded, we have $0\le f(\vx_0,\vy_0) < \infty$.} Since  $\vx_k \in \argmin_{\vx\in\setX}\|\vx - \vy_{k-1}\|_2^2$, we have
	$f(\vx_k,\vy_{k-1})\leq f(\vz_{k-1})$. Similarly, we have $f(\vx_k,\vy_{k}) \le f(\vx_k,\vy_{k-1})\leq f(\vz_{k-1})$. Hence we have $
	f(\vz_{k-1}) \geq f(\vz_k), \forall ~ k\geq 1,$
	which together with the fact that $\inf_{\vz} f(\vz) \geq 0$ gives that the sequence $\{f(\vz_{k})\}$ is monotonically decreasing and lower bounded, hence convergent.

	Show $(ii)$: due to the assumption $\vx_k\in \overline\setX, \vy_k\in\overline\setY,\forall k\ge k_0$, it follows from \eqref{eq:3point property} that
	\e\begin{split}
		f(\vx_k,\vy_{k-1}) - f(\vz_k) &  \geq \alpha\|\vy_{k-1} - \vy_k\|_2^2, \ \forall k\ge k_0+1,
	\end{split}\label{eq:sufficient decrease y 1}\ee
	which together with $f(\vx_k,\vy_{k-1})\leq f(\vz_{k-1})$ gives \eqref{eq:sufficient decrease}.
	Repeating \eqref{eq:sufficient decrease} for all $k$ and summing them up, we have
	\[
	\sum_{k=k_0 +1}^\infty \|\vy_{k-1} - \vy_k\|_2^2 \leq \frac{1}{\alpha}\sum_{k=k_0 + 1}^\infty [f(\vz_{k-1}) - f(\vz_k)] \leq \frac{1}{\alpha} f(\vx_0,\vy_0),
	\]
	which immediately implies $
	\lim_{k\rightarrow\infty} \|\vy_k - \vy_{k-1}\|_2 = 0.$
	The above equation implies that	for any $\epsilon >0$, there exists $k_1$ such that $\|\vy_{k} - \vy_{k-1}\|_2 \leq \epsilon,~\forall ~ k\geq k_1$. Picking $\epsilon$ such that \eqref{eq: local contraction} holds, we have $
	\|\vx_{k+1} - \vx_{k}\|_2 \leq \beta \|\vy_{k+1} - \vy_{k}\|_2$
	for all $k\geq k_1$. Letting $k\rightarrow \infty$, we conclude $
	\lim_{k\rightarrow\infty} \|\vx_k - \vx_{k-1}\|_2 = 0.$

	%	Show $(iii)$: from the statement $(ii)$, we have
	% \[
	%	 \infty>\|\vx_0- \vy_0 \|_2^2 = f(\vx_0,\vy_0)\geq f(\vx_{k},\vy_{k}) = \|\vx_k- \vy_k \|_2^2 ~~~~\forall ~ k\geq 1
	%	\]
	%	which implies both $\{\vx_{k}\}$ and $\{\vy_{k}\}$ are bounded.

	Show $(iii)$: By the definition of $\vx_k$, $\vzero$ must lie in the subdifferential at $\vx_k$ of the function $\vx \longmapsto f(\vx,\vy_{k-1})$. Hence
	\e
	\vzero \in 2(\vx_k - \vy_{k-1}) + \partial\delta_{\setX}(\vx_k).
	\label{eq:partial x}\ee
	And similarly $
	\vzero \in \partial_{\vy} f(\vx_{k},\vy_k).$
	Noting that $
	\partial_{\vx} f(\vx_{k},\vy_k) = 2(\vx_k - \vy_{k}) +\partial\delta_{\setX}(\vx_k),$
	which together with \eqref{eq:partial x} gives
	\begin{align*}
	2(\vx_k - \vy_{k}) - 2(\vx_k - \vy_{k-1}) = 2(\vy_{k-1} - \vy_{k})\in\partial_{\vx} f(\vx_{k},\vy_k).
	\end{align*}
	{Note that $g$ in \eqref{eq:unconstraind} is continuously differentiable, we have $\partial f(\vx,\vy) = \big(\nabla_{\vx} g(\vx,\vy) + \partial\delta_{\setX}(\vx), \nabla_{\vy} g(\vx,\vy) + \partial\delta_{\setY}(\vy) \big) = \big(\partial_{\vx} f(\vx,\vy), \partial_{\vy} f(\vx,\vy) \big)$ \cite[Proposition 3]{attouch2010proximal}.}  Thus,  we have $
	(2(\vy_{k-1} - \vy_{k}),\vzero)\in \partial f(\vx_k,\vy_k).$
	This completes the proof for \Cref{lem:sufficient decrese}.
\end{proof}

\Cref{lem:sufficient decrese} ensures a sufficient decrease of the objective function after one step update of $\vx$ and $\vy$. However, we note that the sufficient decrease guaranteed by \eqref{eq:sufficient decrease} is slightly different than the classical one in convergence analysis (like in \cite{attouch2010proximal}) where $f(\vz_{k-1}) - f(\vz_k) \geq c\|\vz_{k-1} - \vz_k\|_2^2$ for some $c>0$ is required.

Let $\calL(\vz_0)$ denote the set of limit points of $\{\vz_k\}$, i.e.,
\[
\calL(\vz_0) = \left\{\overline\vz\in\R^{n}\times \R^n: \exists\ \{k_m\}_{m\in\N}, \ \textup{such that} \lim_{m\rightarrow \infty}\vz_{k_m} = \overline \vz \right\}.
\]
The following result establishes several properties of the limit points set $\calL(\vz_0)$.
\begin{lem}\label{lem:safeguard}
	Under \Cref{assum}, $\calL(\vz_0)$ obeys the following properties.
	\begin{enumerate}[(i)]
		\item $\calL(\vz_0)$ is a nonempty compact connected set and the iterates  $\{\vz_k\}$ satisfies
		\e
		\lim_{k\rightarrow \infty} \dist(\vz_k,\calL(\vz_0)) = 0.
		\label{eq:dist to limit point 0}\ee
		
		\item The objective function $f$ is finite and constant on $\calL(\vz_0)$ and
		\e
		\lim_{k\rightarrow \infty} f(\vz_k) = f(\vz^\star),\ \forall \ \vz^\star\in\calL(\vz_0).
		\label{eq:limit f}\ee
		
		\item Any $\vz^\star\in\calL(\vz_0)$ is a critical point of \eqref{eq:unconstraind}.
	\end{enumerate}
\end{lem}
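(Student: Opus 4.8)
The plan is to derive all three assertions from the three facts already supplied by \Cref{lem:sufficient decrese}: the iterate sequence $\{\vz_k\}$ is bounded (\Cref{assum}), it is asymptotically regular in the sense that $\|\vz_k - \vz_{k-1}\|_2 \to 0$, the value sequence $\{f(\vz_k)\}$ decreases to a finite limit $f^\star$, and the safeguard $\vd_k \in \partial f(\vz_k)$ holds with $\vd_k \to \vzero$. Each property of $\calL(\vz_0)$ then follows by a limiting argument built on one of these.

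For part (i), nonemptiness is immediate from boundedness via Bolzano--Weierstrass, and compactness holds because the set of subsequential limits of a bounded sequence is always closed and bounded. The relation $\dist(\vz_k,\calL(\vz_0)) \to 0$ I would establish by contradiction: if some subsequence stayed a fixed distance away from $\calL(\vz_0)$, boundedness would extract a further convergent subsequence whose limit nonetheless lies in $\calL(\vz_0)$, a contradiction. Connectedness is the one genuinely delicate point. I would again argue by contradiction: were $\calL(\vz_0)$ disconnected, compactness would split it into two nonempty compact pieces separated by a positive distance $d$; since the tail of $\{\vz_k\}$ hugs $\calL(\vz_0)$ while infinitely many iterates accumulate near each piece, the sequence must repeatedly traverse the gap of width at least $d$, forcing $\|\vz_k - \vz_{k-1}\|_2 \geq d/3$ infinitely often and contradicting asymptotic regularity.

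For part (ii), the crucial observation is that every iterate is feasible, $\vx_k \in \setX$ and $\vy_k \in \setY$, so the indicator terms in \eqref{eq:unconstraind} vanish and $f(\vz_k) = g(\vz_k)$, a continuous function. Fixing any $\vz^\star \in \calL(\vz_0)$ with a subsequence $\vz_{k_m} \to \vz^\star$, continuity of $g$ gives $g(\vz_{k_m}) \to g(\vz^\star)$, while closedness of $\setX$ and $\setY$ forces $\vz^\star \in \setX \times \setY$, so that $f(\vz^\star) = g(\vz^\star)$. Combined with the convergence of $\{f(\vz_k)\}$ from \Cref{lem:sufficient decrese}(ii), this yields $f(\vz^\star) = \lim_m f(\vz_{k_m}) = f^\star$, showing $f$ is finite, constant on $\calL(\vz_0)$, and equal to the limit value.

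For part (iii), I would invoke the closedness of the limiting subdifferential encoded in its definition. Given $\vz^\star \in \calL(\vz_0)$ with $\vz_{k_m} \to \vz^\star$, part (ii) supplies $f(\vz_{k_m}) \to f(\vz^\star)$, and the safeguard gives $\vd_{k_m} \in \partial f(\vz_{k_m})$ with $\vd_{k_m} \to \vzero$ since $\|\vy_{k-1} - \vy_k\|_2 \to 0$. These are precisely the three ingredients in the definition of $\partial f$, so $\vzero \in \partial f(\vz^\star)$, i.e.\ $\vz^\star$ is a critical point of \eqref{eq:unconstraind}. The main obstacle throughout is the lower semicontinuity of $f$: because $f$ carries indicator functions it is not continuous, and the feasibility-plus-closedness argument of part (ii) is exactly what licenses the passage to the limit in both (ii) and (iii); the connectedness argument in (i) is the second step that requires genuine care.
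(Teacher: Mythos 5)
Your proposal is correct and follows essentially the same route as the paper's proof: part (ii) via feasibility of the iterates, closedness of $\setX$ and $\setY$, and continuity of $g$; part (iii) via the closedness of the graph of the limiting subdifferential under $f$-attentive convergence, using \eqref{eq:limit f} and the vanishing safeguard vectors $\vd_k$. The only difference is cosmetic: for part (i) the paper simply cites ``classical properties of sequences in $\R^n$'' (Lemma 3.5 of Bolte et al.) for connectedness and \eqref{eq:dist to limit point 0}, whereas you prove these facts inline --- your gap-traversal contradiction is precisely the argument behind the cited lemma, so nothing is gained or lost.
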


\begin{proof}[Proof of \Cref{lem:safeguard}]
	Show $(i)$: It is clear that $\calL(\vz_0)$ has at least one convergent subsequence since by assumption the sequence $\{(\vx_k,\vy_k)\}$ is bounded. Also noting that $\calL(\vz_0) = \bigcap_{l\in \setN}\overline{\bigcup_{k\geq l}\{\vz_{k}\}}$ and $\{\vz_{k}\}$ lies in a closed set and it is bounded, the set $ \overline{\bigcup_{k\geq l}\{\vz_{k}\}}$ is compact for any $l\geq 0$. Thus, we conclude that $\calL(\vz_0)$ is compact by interpreting it as the intersection of compact sets. {The connectedness of $\calL(\vz_0)$ and \eqref{eq:dist to limit point 0} follow from \cite[Lemma 3.5]{bolte2014proximal} by utilizing the property $\lim_{k\rightarrow\infty} \|\vx_k - \vx_{k-1}\|_2 + \|\vy_k - \vy_{k-1}\|_2= 0$.}
	
	Show $(ii)$: we extract an arbitrary convergent subsequence $\{\vz_{k_m}\}_m$ from $\{\vz_{k}\}$ with limit $\vz^\star =(\vx^\star,\vy^\star)$. Since we have $\vx_{k_m}\in \setX$ and $\vy_{k_m}\in \setY$ for all $k_m\geq k_0$, it follows from the closedness of $\setX$ and $\setY$ that $\vx^\star\in \setX , ~\vy^\star\in \setY$ and
	\[
	\delta_{\setX}(\vx^\star)= \delta_{\setX}(\vx_{k_m}) = 0, ~~\delta_{\setY}(\vy^\star)= \delta_{\setY}(\vy_{k_m}) = 0,~~~~\forall~m\geq 1,
	\]
	which together with the fact that $g$ is a continuous function gives
	\[
	\lim_{m\rightarrow \infty} f(\vz_{k_m}) =\lim_{m\rightarrow \infty} g(\vz_{k_m}) + \delta_{\setX}(\vx_{k_m}) +\delta_{\setY}(\vy_{k_m}) = f(\vz^\star).
	\]
	Now utilizing the statement $(ii)$ in \Cref{lem:sufficient decrese} that the sequence $\{f(\vz_{k})\}$ is convergent, we have $
	f(\vz^\star)= \lim_{m\rightarrow \infty} f(\vz_{k_m}) = \lim_{k\rightarrow \infty} f(\vz_{k}).$ Thus the objective function $f$ is constant on $\calL(\vz_0)$ since $\vz^\star$ is the limit point of any convergent subsequence.

	Show $(iii)$: 	It follows from \eqref{eq:y difference converges} and \eqref{eq:subdifferential} that $\vd_k\in \partial f(\vx_k,\vy_k)$ and $
	\lim\limits_{k\rightarrow\infty} \vd_k = {\bf 0}.$
	Now for any convergent subsequence $\{\vz_{k_m}\}_m$ with limit $\vz^\star$, we have that $(\vz_{k_m}, \vd_{k_m})$ belongs to the graph of $\partial f$ and $(\vz_{k_m}, \vd_{k_m}) \rightarrow (\vz^\star, {\bf 0})$. By invoking \eqref{eq:limit f} and the definition of $\partial f$, we immediately conclude that $(\vz^\star, {\bf 0})$  belongs to the graph of $\partial f$, hence $
	\vzero\in \partial f(\vz^\star),$
	which implies that $\vz^\star$ is a critical point for \eqref{eq:min x-y}.
\end{proof}

\subsection{Convergence to a critical point}
The following result establishes that $f$ obeys the KL property at $\calL(\vz_0)$.
\begin{lem}\label{lem:KL}
	There exist uniform constants $C > 0, \delta >0$ and $\theta \in [0,1)$ such
	that
	\e
	\left|f(\vz) - f(\vz^\star)\right|^{\theta} \leq C \dist(0, \partial f(\vz))
	\label{eq:KL for f}\ee
	for any $\vz^\star\in \calL(\vz_0)$ and $\vz\in\R^{2n}$ with $\dist(\vz, \calL(\vz_0))\leq \delta$.
\end{lem}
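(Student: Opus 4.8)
The plan is to upgrade the pointwise KL property to a \emph{uniformized} KL property over the compact limit set $\calL(\vz_0)$ by a finite-covering argument, exactly in the spirit of the uniformization lemma \cite[Lemma 6]{bolte2014proximal}. I would rely on three ingredients that are already available: that $f$ is semi-algebraic and hence obeys the pointwise KL inequality of \Cref{def:KL} at each of its critical points, that every $\vz^\star\in\calL(\vz_0)$ is such a critical point (\Cref{lem:safeguard}(iii)), and that $f$ is constant on $\calL(\vz_0)$ (\Cref{lem:safeguard}(ii)).

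First I would verify that $f = g + \delta_{\setX} + \delta_{\setY}$ is proper, lower semi-continuous and semi-algebraic: $g$ is a polynomial, the indicator of a closed semi-algebraic set is lower semi-continuous and semi-algebraic, finite sums preserve all three properties, and $\domain f = \setX\times\setY\neq\emptyset$. Invoking the fact recalled in the introduction (\cite[Theorem 5.1]{bolte2014proximal}), $f$ therefore satisfies the exponent form of the KL inequality at every critical point. Combined with \Cref{lem:safeguard}(iii), this yields, for each $\vz^\star\in\calL(\vz_0)$, radii and constants $\delta_{\vz^\star}>0$, $\theta_{\vz^\star}\in[0,1)$, $C_{\vz^\star}>0$ with $|f(\vz)-f(\vz^\star)|^{\theta_{\vz^\star}}\le C_{\vz^\star}\dist(0,\partial f(\vz))$ on $B(\vz^\star,\delta_{\vz^\star})$. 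Crucially, by \Cref{lem:safeguard}(ii) the value $f(\vz^\star)$ equals a single constant $f^\star$ across all of $\calL(\vz_0)$, so the reference value on the left-hand side is unambiguous.

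Next I would cover the compact set $\calL(\vz_0)$ (\Cref{lem:safeguard}(i)) by the open balls $\{B(\vz^\star,\delta_{\vz^\star}/2)\}_{\vz^\star\in\calL(\vz_0)}$ and extract a finite subcover $B(\vz^\star_1,\delta_1/2),\dots,B(\vz^\star_p,\delta_p/2)$. I then set $\theta=\max_i\theta_i$ and $C=\max_i C_i$, and choose $\delta>0$ small enough that the tube $\{\vz:\dist(\vz,\calL(\vz_0))\le\delta\}$ lies inside $\bigcup_i B(\vz^\star_i,\delta_i)$ and, using continuity of $g$ on the domain together with compactness of $\calL(\vz_0)$, that $|f(\vz)-f^\star|\le 1$ for every such $\vz$ in $\domain f$. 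For a given $\vz$ in the tube, it falls in some $B(\vz^\star_i,\delta_i)$, so the local inequality gives $|f(\vz)-f^\star|^{\theta_i}\le C_i\dist(0,\partial f(\vz))$; since $|f(\vz)-f^\star|\le1$ and $\theta\ge\theta_i$ we have $|f(\vz)-f^\star|^{\theta}\le|f(\vz)-f^\star|^{\theta_i}$, and the uniform bound with $(C,\theta)$ follows. Points $\vz\notin\domain f$ are handled trivially, as there $\partial f(\vz)=\emptyset$ and $\dist(0,\partial f(\vz))=+\infty$ by convention.

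The main obstacle is reconciling the finitely many local KL data into a single triple $(C,\delta,\theta)$: merging the constants $C_i$ is immediate, but merging the exponents $\theta_i$ into $\theta=\max_i\theta_i$ is only valid after restricting to a neighborhood where $|f(\vz)-f^\star|\le1$, so that raising to the larger exponent can only decrease the left-hand side. Securing that bound is where the continuity of $g$ near $\calL(\vz_0)$ and, above all, the constancy $f\equiv f^\star$ on $\calL(\vz_0)$ from \Cref{lem:safeguard}(ii) are indispensable; without the constancy the centering value would vary across the limit set and the uniformization would break down.
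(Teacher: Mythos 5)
Your proposal is correct and follows essentially the same route as the paper: both establish that $f$ is proper, lower semi-continuous and semi-algebraic (hence KL at every point of its domain) and then upgrade to uniform constants over the compact set $\calL(\vz_0)$ using \Cref{lem:safeguard}. The only difference is presentational — the paper outsources the uniformization step to a cited lemma of Attouch et al., whereas you carry out that finite-covering argument (half-radius subcover, $\theta=\max_i\theta_i$, $C=\max_i C_i$, restriction to a tube where $|f(\vz)-f^\star|\le 1$, with the constancy of $f$ on $\calL(\vz_0)$ making the centering value unambiguous) explicitly and correctly.
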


\begin{proof}[Proof of \Cref{lem:KL}]
	Under the semi-algebraic assumption of sets $\setX$ and $\setY$, we immediately conclude that the indicator functions $\delta_{\setX}(\vx)$ and $\delta_{\setY}(\vy)$ are semi-algebraic. We then have $f$ satisfies the KL property at any point in its effective domain, since it is lower semi-continuous and semi-algebraic \cite{bolte2007clarke}. 	 The remaining proof follows from \Cref{lem:sufficient decrese} in \cite{attouch2009convergence} and \Cref{lem:safeguard}.
\end{proof}

\begin{thm}\label{thm:main convergence}
	Under  \Cref{assum}, the sequence $\{(\vx_k,\vy_k)\}$ is convergent and converges to a critical point of \eqref{eq:unconstraind}.
\end{thm}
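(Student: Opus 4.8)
The plan is to upgrade the subsequential convergence of \Cref{lem:safeguard} to full convergence by showing that $\{\vz_k\}$ has \emph{finite length}, i.e. $\sum_{k} \|\vz_{k+1}-\vz_k\|_2 < \infty$, which forces $\{\vz_k\}$ to be Cauchy and hence convergent in $\R^{2n}$; its limit then lies in $\calL(\vz_0)$ and is critical by \Cref{lem:safeguard}(iii). Because the sufficient-decrease and safeguard estimates of \Cref{lem:sufficient decrese} are expressed purely through the $\vy$-gaps, I would first establish $\sum_k\|\vy_{k+1}-\vy_k\|_2<\infty$ and only afterwards transfer summability to the $\vx$-iterates via the local contraction property \eqref{eq: local contraction}. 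Write $f^\star$ for the finite, constant value of $f$ on $\calL(\vz_0)$ provided by \Cref{lem:safeguard}(ii) and set $r_k := f(\vz_k)-f^\star \ge 0$; by \Cref{lem:sufficient decrese}(ii) the sequence $r_k$ decreases monotonically to $0$. If $r_{k}=0$ for some finite $k$, then monotonicity together with \eqref{eq:sufficient decrease} forces $\vy_j$ to be eventually constant, and \eqref{eq: local contraction} makes $\vx_j$ eventually constant as well, so the claim is trivial; hence I may assume $r_k>0$ for every $k$.

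The core is the Kurdyka--\L{ojasiewicz} telescoping argument, adapted to the one-sided estimates available here. Since $\dist(\vz_k,\calL(\vz_0))\to 0$ by \Cref{lem:safeguard}(i), there is an index $K$ so that $\vz_k$ lies in the KL neighborhood of \Cref{lem:KL} for all $k\ge K$; combining \eqref{eq:KL for f} with the safeguard bound $\dist(0,\partial f(\vz_k))\le\|\vd_k\|_2 = 2\|\vy_{k-1}-\vy_k\|_2$ from \Cref{lem:sufficient decrese}(iii) yields $r_k^{\theta}\le 2C\,\|\vy_{k-1}-\vy_k\|_2$ for $k\ge K$. I then introduce the concave desingularizing function $\varphi(s)=s^{1-\theta}$, whose concavity gives $\varphi(r_k)-\varphi(r_{k+1})\ge (1-\theta)\,r_k^{-\theta}(r_k-r_{k+1})$. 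Feeding in the partial sufficient decrease $r_k-r_{k+1}\ge \alpha\|\vy_k-\vy_{k+1}\|_2^2$ from \eqref{eq:sufficient decrease} and the KL estimate for $r_k^{-\theta}$, this produces
\[
\|\vy_k-\vy_{k+1}\|_2^2 \le \frac{2C}{\alpha(1-\theta)}\,\|\vy_{k-1}-\vy_k\|_2\,\bigl(\varphi(r_k)-\varphi(r_{k+1})\bigr).
\]
Applying $\sqrt{ab}\le \tfrac12 a+\tfrac12 b$ and summing from $K$ to $N$ lets the resulting $\tfrac12\sum\|\vy_{k-1}-\vy_k\|_2$ term be absorbed into the left-hand side, while the $\varphi$-differences telescope and are bounded by $\varphi(r_K)$; letting $N\to\infty$ then gives $\sum_k\|\vy_{k+1}-\vy_k\|_2<\infty$.

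Finally I would transfer summability to $\vx$: for $k$ large enough that $\|\vy_k-\vy_{k-1}\|_2\le\eps$ (possible since $\|\vy_k-\vy_{k-1}\|_2\to 0$ by \eqref{eq:y difference converges}), the local contraction property \eqref{eq: local contraction} gives $\|\vx_{k+1}-\vx_k\|_2\le\beta\|\vy_k-\vy_{k-1}\|_2$, so $\sum_k\|\vx_{k+1}-\vx_k\|_2<\infty$ as well; adding the two series yields finite length of $\{\vz_k\}$, hence convergence of $\{(\vx_k,\vy_k)\}$ to some $\vz^\star\in\calL(\vz_0)$, which is critical by \Cref{lem:safeguard}(iii). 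I expect the main obstacle to be precisely this decoupling: because \Cref{lem:sufficient decrese} only delivers a \emph{partial} sufficient decrease and a safeguard controlled by $\vy$-gaps alone, the standard KL machinery can certify finite length of the $\vy$-trajectory directly, and it is essential that the local contraction property \eqref{eq: local contraction} be invoked to recover summability of the $\vx$-increments — without it the usual argument, which relies on the two-sided estimate \eqref{eq:sufficient decreae intro classical}, does not close.
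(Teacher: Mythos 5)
Your proposal is correct and follows essentially the same route as the paper's proof: the KL inequality from \Cref{lem:KL} combined with the partial sufficient decrease and the $\vy$-controlled safeguard bound from \Cref{lem:sufficient decrese}, a concavity/telescoping argument (your $\sqrt{ab}\le\tfrac12(a+b)$ step is algebraically the same as the paper's $\tfrac{a^2}{b}+b\ge 2a$ rearrangement) to get summability of the $\vy$-increments, and then the local contraction property \eqref{eq: local contraction} to transfer summability to the $\vx$-increments. Your explicit treatment of the degenerate case $f(\vz_k)=f^\star$ (to avoid dividing by zero in the concavity step) is a minor point the paper glosses over, but otherwise the two arguments coincide.
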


\begin{proof}[Proof of \Cref{thm:main convergence}]
	By invoking \eqref{eq:dist to limit point 0} and \eqref{eq:limit f},  there exists a finite integer $k_2\ge k_0$ such that $\text{dist}(\vz_k, \calL(\vz_0))\leq \delta$ and $f(\vz^\star)< f(\vz_k)< f(\vz^\star) + \eta$ with any $\eta>0$ for all $k\geq k_2$. Now from the concavity of the function $t^{1-\theta}$ with domain $t>0$, we have
	\[
	\left(f(\vz_{k+1}) - f(\vz^\star )\right)^{1-\theta} \leq \left(f(\vz_{k}) - f(\vz^\star)\right)^{1-\theta} + (1-\theta)\frac{f(\vz_{k+1}) - f(\vz_{k})}{\left(f(\vz_k) - f(\vz^\star)\right)^\theta}.
	\]
	Thus, for all $k\geq k_2$,
	\begin{align}\label{convexity}
	&\left(f(\vz_{k}) - f(\vz^\star)\right)^{1-\theta} - \left(f(\vz_{k+1}) - f(\vz^\star)\right)^{1-\theta} \nonumber\\
	&\geq (1-\theta)\frac{f(\vz_{k}) - f(\vz_{k+1})}{|f(\vz_k) - f(\vz^\star)|^\theta}\nonumber\\
	& \geq (1-\theta)\frac{\|\vy_k - \vy_{k+1}\|_2^2}{C \dist(0,\partial f(\vx_k,\vy_k))} \nonumber\\
	& \geq \frac{1-\theta}{C}\frac{\|\vy_k - \vy_{k+1}\|_2^2}{2\|\vy_{k-1} - \vy_k\|_2} \nonumber\\
	& = \frac{1-\theta}{2C}\left(\frac{\|\vy_k - \vy_{k+1}\|_2^2}{\|\vy_{k-1} - \vy_k\|_2} + \|\vy_{k-1} - \vy_k\|_2 - \|\vy_{k-1} - \vy_k\|_2\right) \nonumber\\
	& \geq \frac{1-\theta}{2C}\left(2\|\vy_k - \vy_{k+1}\|_2 - \|\vy_{k-1} - \vy_k\|_2\right),
	\end{align}
	where the third line follows from \eqref{eq:sufficient decrease} and \eqref{eq:KL for f}, {and the forth line utilizes  \eqref{eq:subdifferential} (i.e., $\vd_k = (2(\vy_{k-1} - \vy_{k}), \vzero) \in \partial f(\vx_k,\vy_k)$) which implies that 
		\[\dist(0,\partial f(\vx_k,\vy_k)) \leq \|\vd_k\| = 2 \|\vy_{k-1} - \vy_{k}\|_2.
		\]}
	Repeating the above equation for $k$ from $k_2$ to $\infty$ and summing them gives
	\e
	\sum_{k=k_2}^\infty \|\vy_{k+1} - \vy_k\|_2 \leq \frac{2C}{1-\theta}\left(f(\vz_{k_2}) - f(\vz^\star)\right)^{1-\theta} + \|\vy_{k_2-1} - \vy_{k_2}\|_2 < \infty, \label{eq:square root summable}
	\ee
	which implies that the series $\{\sum_{k=k_2}^m \|\vy_{k+1} - \vy_k\|_2\}_m$ is convergent. Thus,
	\begin{align*}
	\limsup\limits_{m\rightarrow \infty, m_1,m_2\geq m } ~ \sum_{k=m_1}^{m_2} \left\| \vy_{k+1} - \vy_{k}\right\|_2 = 0.
	\end{align*}
	From triangle inequality we have
	\begin{align*}
	\limsup\limits_{m\rightarrow \infty, m_1,m_2\geq m } ~ \sum_{k=m_1}^{m_2} \left\| \vy_{k+1} - \vy_{k}\right\|_2
	\geq \limsup\limits_{m\rightarrow \infty, m_1,m_2\geq m } ~ \left\| \vy_{m_2+1} - \vy_{m_1}\right\| _2,
	\end{align*}
	which gives that $\limsup_{m\rightarrow \infty, m_1,m_2\geq m } ~ \left\| \vy_{m_2} - \vy_{m_1}\right\|_2 =0$. Thus the sequence $\{\vy_{k}\}$ is Cauchy, hence it is convergent.
	
	Due to $\|\vy_k - \vy_{k-1}\|_2 \rightarrow 0$, there exists $k_1\ge k_0$ such that $\|\vy_{k} - \vy_{k-1}\|_2 \leq \eps$ for all $k\geq k_1$, where $\eps>0$ is a fixed constant defined in local contraction property in \Cref{assum}. It then follows from \eqref{eq: local contraction} that
	\[
	\| \vx_{k+1}- \vx_{k}\|_2 \leq \beta \|\vy_{k} - \vy_{k-1}\|_2, \ \forall \ k\geq \max\{k_0,k_1\}.
	\]
	Now invoking \eqref{eq:square root summable} gives
	\[
	\sum_{k=\max\{k_1,k_2\}}^\infty \|\vx_{k+1} - \vx_k\|_2 \leq \beta \sum_{k=\max\{k_1,k_2\}}^\infty \|\vy_{k} - \vy_{k-1}\|_2 < \infty,
	\]
	which (with a similar argument for $\{\vy_k\}$) implies that $\{\vx_{k}\}$ is convergent.
\end{proof}

\subsection{Convergence rate}
\Cref{thm:main convergence} reveals that the sequence $\{(\vx_k,\vy_k)\}$ is convergent. Given the explicit KL exponent $\theta$ in \Cref{lem:KL}, we can have the convergence rate concerning how fast the sequence $\{(\vx_k,\vy_k)\}$ converges to its limit point. We note that the connection between convergence rate and the KL exponent $\theta$ has been populated and exploited in \cite{attouch2009convergence,attouch2010proximal,bolte2014proximal}. The following result establishes the convergence rate for the sequence $\{(\vx_k,\vy_k)\}$ based on the explicit KL exponent $\theta$.

\begin{thm} (convergence rate) Suppose the sequence $\{\vz_k=(\vx_k,\vy_k)\}$ is generated by \Cref{alg:APM} and converges to a critical point $\vz^\star=(\vx^\star,\vy^\star)$, and assume the function $f$ obeys the KL property with the KL exponent $\theta$ at this critical point $\vz^\star$. Then we have
	\begin{enumerate}[(i)]
		\item if $\theta = 0$, $\{\vz_k\}$ converges to $\vz^\star$ in a finite step.
		\item if $\theta \in (0,\frac{1}{2}]$, then there exist a $0<\rho<1$, $\widetilde{c}>0$ and a positive integer $\widetilde k$ such that
		\e\|\vz_k - \vz^\star\|_2 \leq \widetilde{c}\cdot \rho^k,~~\forall~k\geq \widetilde k.
		\label{eq:rate 1}\ee
		\item if $\theta \in (\frac{1}{2},1)$, then there exist a $\overline{c} >0$ and a positive integer $\overline k$ such that
		\e
		\|\vz_k - \vz^\star\|_2 \leq \overline c \cdot k^{-\frac{1-\theta}{2\theta -1}},~~\forall~k\geq \overline k.
		\label{eq:rate 2}\ee
	\end{enumerate}
	\label{thm:convergence rate}\end{thm}

\begin{proof}[Proof of Theorem \ref{thm:convergence rate}]\footnote{The proof of Theorem \ref{thm:convergence rate} shares similar strategies as those in \cite{attouch2009convergence,attouch2010proximal,bolte2014proximal}. However, as we explained in \Cref{sec:proof highlight}, the sufficient decrease property (and also the safeguard property) in \eqref{eq:sufficient decreae intro} which is utilized here is slightly different than the standard one as in \eqref{eq:sufficient decreae intro classical}. Thus, we include the proof of  Theorem \ref{thm:convergence rate}.}
	Since the function $f$ satisfies the KL property at $z^\star$, there exists $\delta^\star>0$ and $\theta\in[0,1)$ such that
	\[
	|f(\vz) - f(\vz^\star)|^\theta \leq C \dist(0,\partial f(\vz)), \ \forall \ \vz\in \text{B}(\vz^\star, \delta^\star).
	\]
	It follows from the fact $\vz_k\rightarrow \vz^\star$ that there exists a positive integer $k_2$ such that $\|\vz_k -\vz^\star \|_2 \leq \delta^\star$ for all $k\geq k_2$. This together with the above KL property implies
	\e
	|f(\vz_k) - f(\vz^\star)|^\theta \leq C \dist(0,\partial f(\vz_k)),  \ \forall \ k\geq k_2.
	\label{eq:KL consequence}\ee
	In the sequel of the proof, we consider $k\geq k_2$ as we utilize \eqref{eq:KL consequence} to prove the three arguments in Theorem \ref{thm:convergence rate}.
	
	Show $(i)$: In the case where $\theta = 0$, it follows from \eqref{eq:KL consequence} that $\dist(0,\partial f(\vz_k)) \geq {1}/{C}>0$ when $f(\vz_k) >f(\vz^\star)$. 
	%	\begin{align*}
	%	\left\{\begin{aligned}
	%	&\dist(0,\partial f(\vz_k)) \geq {1}/{C},~~~~f(\vz_k) >f(\vz^\star), \\
	%	&\dist(0,\partial f(\vz_k)) =0 ,~~~~f(\vz_k) =f(\vz^\star).
	%	\end{aligned}
	%	\right.
	%	\end{align*}
	Suppose at $k+1$-th iteration $f(\vz_{k+1}) >f(\vz^\star)$, which implies that $\dist(0,\partial f(\vz_{k+1})) \geq 1$. This together with \eqref{eq:sufficient decrease} (i.e. $
	f(\vz_{k})- f(\vz_{k+1}) \geq \alpha\|\vy_{k+1} - \vy_k\|_2^2 $)
	and  \eqref{eq:subdifferential} (i.e. $
	\dist^2(0,\partial f(\vz_{k+1})) \leq 4\|\vy_{k+1} - \vy_k\|_2^2 $) gives
	\[
	f(\vz_{k})- f(\vz_{k+1}) \geq \frac{\alpha}{4} \dist^2(0,\partial f(\vz_{k+1})) \geq \frac{\alpha}{4C^2} >0.
	\]
	Since $\{f(\vz_k)\}$ converges to $f(\vz^\star) \geq 0$, there  exists a finite iteration number $k_3$ such that $f(\vz_{k_3}) = f(\vz^\star) $.

	{
		Show $(ii)$ and $(iii)$: Repeating \eqref{convexity} for all $k$ and summing them up give
		%	\[
		%	\sum_{i = k}^\infty \|\vy_{i+1} - \vy_i\|_2 - \|\vy_{k} - \vy_{k-1}\|_2 \leq  \frac{2C}{1-\theta}[f(\vx_{k},\vy_{k}) - f(\vx^\star,\vy^\star)]^{1-\theta}
		%	\]
		%	hence it follows
		\begin{equation}\label{upper bound on sum}
		\sum_{i = k}^\infty \|\vy_{i+1} - \vy_i\|_2 \leq \|\vy_{k} - \vy_{k-1}\|_2+ \frac{2C}{1-\theta}[f(\vx_{k},\vy_{k}) - f(\vx^\star,\vy^\star)]^{1-\theta}.
		\end{equation}
		The left hand side of the above equation can be further lower bounded as
		\begin{equation}\label{lower bound on sum}
		\sum_{i = k}^\infty \|\vy_{i+1} - \vy_i\|_2 \geq \left\| \sum_{i = k}^\infty \vy_{i+1} - \vy_i \right\|_2 = \| \vy_{k} - \vy^\star\|_2.
		\end{equation}
		%The remaining task is to determine the convergence rate of $\sum_{i = k}^\infty \|\vy_{i+1} - \vy_i\|_2\rightarrow 0$, as $k\rightarrow \infty$,
		Combing \eqref{upper bound on sum} and \eqref{lower bound on sum}  gives
		\begin{align}\label{conv_rate_upperbound}
		\| \vy_{k} - \vy^\star\|_2& \leq \|\vy_{k} - \vy_{k-1}\|_2+ \frac{2C}{1-\theta}[f(\vx_{k},\vy_{k}) - f(\vx^\star,\vy^\star)]^{1-\theta} \nonumber\\
		&\leq \|\vy_{k} - \vy_{k-1}\|_2 + \frac{2C}{1-\theta}\text{dist}(0, \partial f(\vx_k,\vy_k))^{\frac{1-\theta}{\theta}} \nonumber\\
		& = \|\vy_{k} - \vy_{k-1}\|_2 +\frac{4C}{1-\theta} \|\vy_{k} - \vy_{k-1}\|_2 ^{\frac{1-\theta}{\theta}},
		\end{align}
		where the second line is from the KL property of $f(\vz)$ at $\vz^\star$ and the fact that $\frac{1-\theta}{\theta} > 0$, the last inequality follows from \eqref{eq:subdifferential}. Denoting by $Q_k = \sum_{i = k}^\infty \|\vy_{i+1} - \vy_i\|_2$ and noting  that $\|\vy_{k} - \vy_{k-1}\|_2 = Q_{k-1}- Q_{k}$, we have
		\e
		\| \vy_{k} - \vy^\star\|_2 \leq Q_k \leq Q_{k-1}- Q_{k} + \frac{4C}{1-\theta} \left[Q_{k-1}- Q_{k}\right]^{\frac{1-\theta}{\theta}}. \label{eq:convergence rate 1}
		\ee
		Since $Q_{k-1}- Q_{k} \rightarrow 0$,  we define the positive integer $k_4$ such that $Q_{k-1}- Q_{k} < 1,~\forall~k\geq k_4$. We now utilize the proof technique in \cite[Theorem 2]{attouch2009convergence} to obtain the convergence rate for $\| \vy_{k} - \vy^\star\|_2$ from 
		\eqref{eq:convergence rate 1} by considering the following cases:
		\begin{itemize}
			\item  \emph{Case I}: when $\theta \in (0,\frac{1}{2}]$, then there exists  a numerical constant $c_1>0$ such that 
			\e\label{eq:convergence y}
			{\| \vy_{k} - \vy^\star\|_2\le Q_k}    \leq c_1\cdot \rho^k,~~~~\forall ~ k\geq \max\{ k_2,k_4\},
			\ee
			where $\rho = \frac{1+4C/(1-\theta)}{2+ 4C/(1-\theta)} \in (0,1)$ with $C>0$ being the constant in the KL inequality \eqref{eq:KL for f}.    
			
			\item \emph{Case II}: when $\theta \in (\frac{1}{2},1)$, then there exists a constant $c_2>0$ such that
			\e\label{eq:sublinear y}
			{\| \vy_{k} - \vy^\star\|_2\le Q_k} \leq c_2\cdot k^{-\frac{1-\theta}{2\theta -1}},~~~~\forall ~ k> \max\{k_2,k_4\}.
			\ee
			
		\end{itemize}
		We now prove \eqref{eq:rate 1} by using the local contraction property \eqref{eq: local contraction}. Towards that goal, first recall that there exists $k_1$ such that $\|\vy_{k+1} - \vy_k\|_2 \leq \eps$ for all $k\geq k_1$. Thus, the local contraction property \eqref{eq: local contraction} implies that 
		\begin{align*}
		\sum_{i = k}^\infty \|\vy_{i+1} - \vy_i\|_2 &\geq \beta\sum_{i = k+1}^\infty \|\vx_{i+1} - \vx_i\|_2 \\
		&\geq \beta\left\|\sum_{i = k+1}^\infty\vx_{i+1} - \vx_i\right\|_2 =\beta\|\vx_{k+1} - \vx^\star\|_2, \ \forall k \ge k_1,
		\end{align*}
		which together with \eqref{eq:convergence y} implies that 
		\e\label{eq:convergence x}
		\|\vx_{k} - \vx^\star\|_2 \leq \frac{c_1}{\beta}\cdot \rho^k,~~~~\forall ~ k\geq \max\{k_1,k_2,k_4\}.
		\ee
		Setting $\widetilde{k} = \max\{k_1,k_2,k_4\}$ and $\widetilde{c} = \sqrt{2}\max\{c_1, \frac{c_1}{\beta}\}$, we conclude \eqref{eq:rate 1}. Following the same argument, one can obtain \eqref{eq:rate 2}. This completes the proof.
	}	
\end{proof}

{
	\begin{remark}
		When the sets intersect each other and  one of the sets  is H\"{o}lder regular with respect to the other,
		Noll and Rondepierre \cite{noll2016local} also provided sequence convergence and similar convergence rate as \eqref{eq:rate 2} for the alternating projection method. We now discuss the similarities and differences 
		between \cite{noll2016local} and our result in terms of the proof technique.  {On the one hand, both proofs utilize the  three-point property \eqref{eq:3point property} (i.e., the partial sufficient decrease property).}  
		On the other hand, the convergence analysis in \cite{noll2016local} relies on the intersecting property and the H\"{o}lder regularity, while our convergence analysis is based on the local contraction property \eqref{eq: local contraction} and the KL property both of which do not require intersection.
	\end{remark}
	
}

When the sets $\setX$ and $\setY$ are closed and convex (such as subspaces), our framework can recover the linear convergence rate if the interiors of the two sets intersect with each other. This is formally established in the following result\footnote{Similar result has also been established in \cite{bolte2017error} which considered an equivalent form of \eqref{eq:unconstraind}: $\minimize~ f'(\vx)=\|\vx - \calP_{\setY}(\vx)\|^2 + \delta_{\setX}(\vx)$.}.
\begin{cor}\label{cor:linear rate of convex case}
	Suppose that $\setX$ and $\setY$ are closed convex sets and satisfy $\operatorname{reint}(\setX)\cap \operatorname{reint}(\setY) \neq \emptyset$.\footnote{Here, $\operatorname{reint}(\cdot)$ denotes the relative interior of a set.} If the sequence $\{\vz_k=(\vx_k,\vy_k)\}$ generated by \Cref{alg:APM} is bounded,  then it is convergent and converges to a global minimizer of \eqref{eq:min x-y} at a linear rate.
\end{cor}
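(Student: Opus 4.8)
The plan is to reduce the statement to an application of \Cref{thm:main convergence} and \Cref{thm:convergence rate}, the only genuinely new ingredient being the identification of the KL exponent $\theta=\tfrac12$. First I would check that closed convex sets meet \Cref{assum}: the three-point property holds with $\delta_\alpha(\vy_k,\vy_{k-1})=\|\vy_k-\vy_{k-1}\|_2^2$ and $\alpha=1$ by the computation \eqref{eq:3point convex set}, and the local contraction property holds with $\beta=1$ and arbitrary $\eps$ by the nonexpansiveness \eqref{eq:local conctrac convex set}. Since the iterates are assumed bounded, \Cref{lem:sufficient decrese} and \Cref{lem:safeguard} apply verbatim (neither uses semi-algebraicity), so $\calL(\vz_0)$ is a nonempty set of critical points of $f$ on which $f$ is constant. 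Because $g(\vx,\vy)=\|\vx-\vy\|_2^2$ is jointly convex and $\delta_\setX,\delta_\setY$ are convex, $f$ is convex, so every critical point is a global minimizer; as $\operatorname{reint}(\setX)\cap\operatorname{reint}(\setY)\neq\emptyset$, this minimal value is $0$ and every $\vz^\star=(\vx^\star,\vy^\star)\in\calL(\vz_0)$ satisfies $\vx^\star=\vy^\star\in\setX\cap\setY$.

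The heart of the proof is to show that $f$ satisfies the KL inequality \eqref{eq:KL for f} with exponent $\theta=\tfrac12$ on a neighborhood of $\calL(\vz_0)$; note this cannot be borrowed from \Cref{lem:KL}, whose proof invokes semi-algebraicity that general convex sets need not possess. Since $f(\vz^\star)=0$, the required inequality reduces to the linear error bound
\[
\|\vx-\vy\|_2 \;\le\; C\,\dist\!\left(0,\partial f(\vx,\vy)\right)
\]
for $\vx\in\setX$, $\vy\in\setY$ close to $\vx^\star=\vy^\star$ (outside $\setX\times\setY$ the subdifferential is empty and the inequality is vacuous). To make the right-hand side explicit I would use that, for convex $\setX$, $\partial_\vx f(\vx,\vy)=2(\vx-\vy)+N_\setX(\vx)$ with $N_\setX(\vx)$ the normal cone, and apply the Moreau decomposition against the polar (tangent) cone $T_\setX(\vx)$ to obtain $\dist(0,\partial_\vx f)=2\,\|\calP_{T_\setX(\vx)}(\vy-\vx)\|_2$, and symmetrically for $\vy$. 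Thus the error bound becomes the geometric statement that the secant $\vy-\vx$ cannot be nearly normal to $\setX$ at $\vx$ and to $\setY$ at $\vy$ simultaneously unless $\|\vx-\vy\|_2$ is correspondingly small.

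The uniform constant $C$ is exactly where the relative-interior hypothesis enters. I would invoke the fact that $\operatorname{reint}(\setX)\cap\operatorname{reint}(\setY)\neq\emptyset$ makes the pair $\{\setX,\setY\}$ boundedly linearly regular \cite{bauschke1996projection}, and translate bounded linear regularity into the displayed error bound on the bounded neighborhood containing the iterates; this is the mechanism behind the equivalent reduced formulation $\dist(\vx,\setY)^2+\delta_\setX(\vx)$ treated in \cite{bolte2017error}. This passage from bounded linear regularity to the error bound is the main obstacle: the delicate point is producing a single constant valid uniformly over a neighborhood of $\calL(\vz_0)$, and controlling the normal/tangent cone decomposition as $(\vx,\vy)$ varies, rather than only at the limit point itself.

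With the KL exponent $\theta=\tfrac12$ in hand, the conclusion is immediate. The error bound supplies precisely the hypothesis needed for the argument proving \Cref{thm:main convergence} to go through, giving convergence of $\{\vz_k\}$ to some $\vz^\star\in\calL(\vz_0)$, which by the first paragraph is a global minimizer of \eqref{eq:min x-y}. Finally, \Cref{thm:convergence rate}(ii), applied with $\theta=\tfrac12\in(0,\tfrac12]$, yields $\|\vz_k-\vz^\star\|_2\le\widetilde c\,\rho^k$ for some $\rho\in(0,1)$, i.e. linear convergence.
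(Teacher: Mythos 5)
Your reduction follows the same skeleton as the paper's proof: verify that closed convex sets satisfy the three-point property via \eqref{eq:3point convex set} and the local contraction property via \eqref{eq:local conctrac convex set}, conclude convergence to a critical point, use convexity of $f$ to upgrade critical points to global minimizers, and invoke \Cref{thm:convergence rate}(ii) once the KL exponent $\theta=\tfrac12$ is available. Your observation that \Cref{lem:KL} cannot simply be cited---because general closed convex sets need not be semi-algebraic---is correct, and on this point you are actually more careful than the paper, which formally invokes \Cref{thm:main convergence} before supplying the KL property by other means.

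The genuine gap is that the step you yourself call ``the heart of the proof''---the KL inequality with exponent $\tfrac12$---is never established. Via Moreau's decomposition you reduce it to the claim that bounded linear regularity of $\{\setX,\setY\}$ yields
\[
\|\vx-\vy\|_2 \;\le\; C\left(\bigl\|\calP_{T_{\setX}(\vx)}(\vy-\vx)\bigr\|_2^2 + \bigl\|\calP_{T_{\setY}(\vy)}(\vx-\vy)\bigr\|_2^2\right)^{1/2}
\]
uniformly near $\calL(\vz_0)$, and then you concede that this passage ``is the main obstacle''---i.e., the proof is left unfinished exactly where it matters. The paper closes this hole by a different and easier route that never touches normal or tangent cones: it proves the \emph{value-based} error bound $\dist(\vz,\setL)\le C\sqrt{f(\vz)}$ by an explicit convex-geometric construction---the relative-interior hypothesis gives a ball $B(\overline\vx,r)\subseteq\setX\cap\setY$, and for $\vx\in\setX$, $\vy\in\setY$ with $d=\|\vx-\vy\|$ the point $\vq=\frac{d}{r+d}\overline\vx+\frac{r}{r+d}\vx$ is shown to lie in $\setX\cap\setY$, yielding $\dist(\vz,\setL)\le C\|\vx-\vy\|$ with an explicit constant---and then converts this into the KL inequality \eqref{eq:KL for f} with $\theta=\tfrac12$ by quoting the error-bound/KL equivalence for convex functions, \Cref{thm:KL to EB}. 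That equivalence theorem is precisely the ingredient missing from your plan: with it, the subgradient form is not needed at all, and even your own bounded-linear-regularity citation would suffice, since linear regularity immediately gives $\dist(\vx,\setX\cap\setY)\le\kappa\,\dist(\vx,\setY)\le\kappa\|\vx-\vy\|$ for $\vx\in\setX$ and hence the value-based bound; without it, the tangent-cone estimate you would need remains an unproven (and harder) claim.
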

\begin{proof}[Proof of \Cref{cor:linear rate of convex case}]
	Due to the assumption that $\setX$ and $\setY$ intersect to each other, we denote by $\setI = \setX \cap \setY$ the set of optimal solutions to \eqref{eq:min x-y}.  Since both  $\setX$ and $\setY$ are closed convex sets,  it follows from \eqref{eq:3point convex set} and \eqref{eq:local conctrac convex set} that $\setY$ satisfies the three-point property for and the $\setX$  satisfies the local contraction property. We then invoke \Cref{thm:main convergence} to conclude that $\{\vz_k=(\vx_k,\vy_k)\}$ is convergent and converges to a critical point of \eqref{eq:min x-y}. By convexity, any critical point of \eqref{eq:min x-y} is also a global minimizer. Applying \Cref{thm:convergence rate} which ensures  the linear convergence result when $\theta = \frac{1}{2}$, the remaining task is to show the KL exponent at the limit $\lim_{k\rightarrow\infty}\vz_k = \vz^\star$ for \eqref{eq:min x-y} is indeed $\theta = \frac{1}{2}$.
	% To that end, note in this case the optimization problem \eqref{eq:min x-y} is equivalent to
	% \e \label{eq:min x}
	% \minimize_{\vx\in\R^n} \ \left\{  \frac{1}{2} \dist^2(\vx,\setY)   + \delta_{\setX}(\vx)   \triangleq \widetilde f(\vx) \right\}
	% \ee
	% and which has optimal value equals to 0 provided by any  $\vx^\star\in \setI$ and $\widetilde f(\vx) \geq \frac{1}{2} (\dist^2(\vx,\setX) + \dist^2(\vx,\setY))$, \Cref{alg:APM} can be viewed as a forward backward splitting scheme(i.e. projected gradient method) \cite{combettes2005signal} for solving \eqref{eq:min x}, see \cite{bolte2017error}.
	
	In general, it is not easy to directly compute the KL exponent. A widely used strategy is to connect the KL property with other properties that are much easier to compute. A typical example is called the \emph{error bound} \cite{bolte2017error}: a proper semi-continuous function $h(\vu):\R^d\rightarrow \R$ satisfies a local error bound if thee exist $C_3>0, \theta\in[0,1)$ and $\delta>0$ such that $
	\dist(\vu,\argmin h) \leq C_3 (h(\vu) - \min h)^{\theta}$
	for all $\vu\in\{\vv\in\R^{d}:h(\vv)\leq  \min h + \delta\}$. Here $\argmin$ denotes the set of global minimizers and we assume $\argmin h\neq \emptyset$ (i.e., $h$ achieves its minimum $\min h$).
	
	The following result establishes that for a convex function $h$, its KL property is equivalent to error bound.
	\begin{thm}\cite[Theorem 5]{bolte2017error}Let $h(\vu):\R^d\rightarrow \R$ be proper, convex and semi-continuous. Let $\vu^\star\in \argmin h$.
		\begin{enumerate}[(i)]
			\item (KL inequality implies error bound) If $\left|h(\vu) - h(\vu^\star)\right|^{\theta} \leq C_1 \dist(0, \partial h(\vu))$ holds for all $\vu\in \{\vv\in\R^{d}:h(\vv)\leq  \min h + \delta\}\cap B(\vu^\star,\delta)$, then we have $\dist(\vu,\argmin h) \leq \frac{1}{C_1} (h(\vu) - \min h)^{\theta}$ for all $\vu\in \{\vv\in\R^{d}:h(\vv)\leq  \min h + \delta\}\cap B(\vu^\star,\delta)$.
			
			\item (Error bound implies KL inequality) If $h$ obeys the error bound which is to say $\dist(\vu,\argmin h) \leq C_3 (h(\vu) - \min h)^{\theta}$ for all $\vu\in \{\vv\in\R^{d}:h(\vv)\leq  \min h + \delta\}\cap B(\vu^\star,\delta)$, then $\left|h(\vu) - h(\overline{\vu})\right|^{\theta} \leq \frac{1-\theta}{C_3} \dist(0, \partial h(\vu))$ for all $\vu\in \{\vv\in\R^{d}:h(\vv)\leq  \min h + \delta\}\cap B(\vu^\star,\delta)$.
		\end{enumerate}
		\label{thm:KL to EB}\end{thm}
	\Cref{thm:KL to EB} implies that the KL exponents can be explicitly derived by computing the error bounds.
	Thus, the remaining part is to show the error bound condition for \eqref{eq:unconstraind}.\footnote{The following analysis is inspired from the result in \cite{bolte2017error} which considered an equivalent form of \eqref{eq:unconstraind}: $\minimize~ f'(\vx)=\|\vx - \calP_{\setY}(\vx)\|^2 + \delta_{\setX}(\vx)$. Since our problem \eqref{eq:unconstraind} involves two variables, the analysis is slightly different than the one in \cite{bolte2017error} for $f'(\vx)$. Thus, also for the sake of completion, we include the proof of the error bound condition for \eqref{eq:unconstraind}.}. Towards that end, note that due to the assumption $\operatorname{reint}(\setX)\cap \operatorname{reint}(\setY) \neq \emptyset$, there exist $\overline \vx\in \setI$ and $r>0$ such that $B(\overline \vx,r) \subseteq \setI$. We not define the set of the optimal solutions $\setL = \{\vz:\vx = \vy, \vx\in \setI\}$.
	
	We first consider the case where $\vx\in\setX$ and $\vy\in\setY$. To simplify the following notation, let $d = \|\vx-\vy\|$ and construct
	\e\begin{split}
		\vq  &= \frac{d}{r+d} \overline\vx + \frac{r}{r+d}\vx.
	\end{split}\ee
	Since both $\overline\vx,\vx\in\setX$ and $\setX$ is convex, we have $\vq\in\setX$. On the other hand, we rewrite
	\e
	\vq  = \frac{d}{r+d} \overline\vx + \frac{r}{r+d}\vx = \frac{d}{r+d} (\underbrace{\overline \vx + \frac{r}{d} (\vx - \calP_\setY(\vx))}_{\vp}) + \frac{r}{r+d}\calP_\setY(\vx).
	\label{eq:rewrite q}\ee
	Due to $
	\|\vp - \overline\vx\| = \frac{r}{d}\|\vx - \calP_\setY(\vx)\| \leq \frac{r}{d}d = r,$
	we have $\vp\in\setI$ and hence $\vp\in\setY$. This together with \eqref{eq:rewrite q} indicates that $\vq\in\setY$. Thus, we conclude $\vq\in\setI$. Now, we have
	\begin{align*}
	\dist(\vz,\setL) &\leq \|\vx - \vq\| + \|\vy -\vq\| = \frac{d}{r+d}\|\vx - \overline\vx\| + \left\|\frac{d}{r+d}(\vy -\overline\vx) + \frac{r}{r+d}(\vy - \vx)\right\|\\
	&\leq \frac{d}{r+d}\|\vx - \overline\vx\| + \frac{d}{r+d}\|\vy - \overline\vx\| + \frac{r}{r+d}\left\|\vy - \vx\right\|\\
	& \leq \frac{\|\vx - \overline\vx\| +\|\vy - \overline\vx\| }{r}d + \left\|\vy - \vx\right\|\\
	& \leq \left(\sqrt{2}\frac{\|(\vx,\vy) - (\overline\vx,\overline\vx)\| }{r} +1\right) \left\|\vy - \vx\right\|,
	\end{align*}
	where the last line follows because $d = \|\vx-\vy\|$.
	Let $\overline\vz = (\overline\vx,\overline\vx)$. Finally, we obtain
	\e\begin{split}
		\dist(\vz,\setL) \leq \left(\sqrt{2}\frac{\|\vz_0 - \overline\vz\| }{r} +1\right) \sqrt{f(\vz)}
	\end{split}\label{eq:EB final 1}\ee
	for any $\vz\in B\left(\overline\vz,\dist(\overline\vz,\vz_0)\right)$ and $\vx\in\setX,\vy\in\setY$. Now if $\vx\notin \setX$ or $\vy\notin \setY$, we have $f(\vx,\vy) = \infty$, which implies that \eqref{eq:EB final 1} also holds in this case. Therefore, we conclude the following error bound:
	\[
	\dist(\vz,\setL) \leq \left(\sqrt{2}\frac{\|\vz_0 - \overline\vz\| }{r} +1\right) \sqrt{f(\vz)}
	\]
	for all $\vz\in B\left(\overline\vz,\dist(\overline\vz,\vz_0)\right)$, where $\setL$ is the set of optimal solutions to \eqref{eq:unconstraind}.
\end{proof}

\section{Convergence of alternating projection method for designing structured tight frames}\label{sec:frame design}
As stylized applications of \Cref{thm:main convergence}, we provide new convergence guarantee for designing structured tight frames in \cite{tropp2005designing,tsiligianni2014construction}. An $N\times L$ matrix $\mD = \bmat \vd_1 & \cdots \vd_L \emat$ is said to be a {\em frame} of $\R^N$ if there exist positive real numbers $a$ and $b$ such that $0<a\le b<\infty$ and for each $\vd\in\R^N$,
\begin{align}
a\|\vd\|_2^2 \leq \|\mD^\T \vd\|_2^2 = \sum_{\ell = 1}^L|\langle \vd,\vd_\ell \rangle|^2\le b\|\vd\|_2^2.
\label{eq:frame}\end{align}
It is clear that $L\ge N$ to guarantee that \eqref{eq:frame} holds for any $\vd\in\R^N$. The frame $\mD$ is said {\em overcomplete} if $L>N$. Frame has been widely utilized in signal processing as it provides a redundant and concise way of representing signals, in error detection and correction and the design and analysis of filter banks \cite{tropp2005designing}.

The frame $\mD$ is a {\em tight frame} if $a = b$; that is, the frame $\mD$ satisfies a generalized version of Parseval's identity. Such frames with $a=b$ are said $a$-tight. Clearly, the frame $\mD$ is $a$-tight if and only if it has a singular value decomposition of the form
\e
\mD = \mU\bmat \sqrt{a}\mId & \vzero \emat \mV^\T,
\label{eq:tight frame SVD}\ee
where $\mU\in\R^{N\times N}$ and $\mV\in\R^{L\times L}$ are orthonormal matrices. We define the set contains $a$-tight frames by
\e
\setD_a:=\left\{\mD\in\R^{N\times L}: \mD\mD^\T =a \mId\right\}.
\label{eq:setD alpha}\ee
The following result provides a method to calculate an $a$-tight frame that is closest to an arbitrary matrix in Frobenius norm.
\begin{thm}\cite[Theorem 2]{horn1990matrix} For any $\mZ\in\R^{N\times L}$ with $L\geq N$, let $\mZ = \mU\mSigma\mV^\T$ be its singular value decomposition. A nearest $a$-tight frame to $\mZ$ in Forbenius norm is given by $a\mU\mV^\T$, i.e.
	\[
	a \mU\mV^\T \in \argmin_{\mD\in\setD_a}\|\mD - \mZ\|_F^2.
	\]
	Furthermore, if $\mZ$ has full row-rank, then $\alpha \mU\mV^\T = (\mZ\mZ^\T)^{-1/2}\mZ$ is the unique $\alpha$-tight frame closest to $\mZ$.
\end{thm}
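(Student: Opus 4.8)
The plan is to recognize this as a linear maximization over the frame constraint and to solve it explicitly through the SVD of $\mZ$. The first step is to note that the Frobenius norm of any feasible $\mD$ is constant on $\setD_a$: since $\mD\mD^\T = a\mId$ we have $\|\mD\|_F^2 = \trace(\mD\mD^\T) = aN$. Expanding $\|\mD - \mZ\|_F^2 = \|\mD\|_F^2 - 2\langle\mD,\mZ\rangle + \|\mZ\|_F^2$, both the first and the last term are independent of the choice of $\mD\in\setD_a$, so minimizing the distance is equivalent to maximizing the linear functional $\langle\mD,\mZ\rangle = \trace(\mD^\T\mZ)$ over $\setD_a$.

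Next I would parametrize the constraint as $\mD = \sqrt a\,\mW$ with $\mW\mW^\T = \mId$ (so $\mW\in\R^{N\times L}$ has orthonormal rows) and insert the full SVD $\mZ = \mU\mSigma\mV^\T$. Writing $\widetilde\mW = \mU^\T\mW\mV$, which again has orthonormal rows because $\mU,\mV$ are orthogonal, the objective becomes $\langle\mD,\mZ\rangle = \sqrt a\,\trace(\widetilde\mW^\T\mSigma) = \sqrt a\sum_{i=1}^N \widetilde\mW_{ii}\,\sigma_i$, where $\sigma_1,\dots,\sigma_N$ are the singular values of $\mZ$. Since every row of $\widetilde\mW$ has unit norm we have $|\widetilde\mW_{ii}|\le 1$, which yields the upper bound $\langle\mD,\mZ\rangle\le\sqrt a\sum_{i=1}^N\sigma_i$. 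I would then check that the bound is attained by the choice $\widetilde\mW = \bmat\mId & \vzero\emat$, i.e. by $\mD = \sqrt a\,\mU\bmat\mId & \vzero\emat\mV^\T$ (equivalently $\sqrt a\,\mU\mV^\T$ in the compact SVD); a one-line trace computation confirms $\langle\mD,\mZ\rangle = \sqrt a\sum_i\sigma_i$, so this $\mD$ is a minimizer, matching the stated formula up to the normalization constant.

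For the uniqueness statement under the full-row-rank hypothesis, I would invoke the equality condition of the bound: equality in $\sum_i\widetilde\mW_{ii}\sigma_i\le\sum_i\sigma_i$ forces $\widetilde\mW_{ii} = 1$ for every index $i$ with $\sigma_i > 0$. When $\mZ$ has full row rank all $N$ singular values are strictly positive, so every diagonal entry of $\widetilde\mW$ equals $1$; together with the unit-row-norm constraint this pins down $\widetilde\mW = \bmat\mId & \vzero\emat$ uniquely, and hence $\mD = \sqrt a\,\mU\mV^\T$ is the unique minimizer. To obtain the closed form, I would use the compact SVD to write $\mZ\mZ^\T = \mU\mSigma^2\mU^\T$, so that $(\mZ\mZ^\T)^{-1/2} = \mU\mSigma^{-1}\mU^\T$ and therefore $(\mZ\mZ^\T)^{-1/2}\mZ = \mU\mSigma^{-1}\mU^\T\mU\mSigma\mV^\T = \mU\mV^\T$, which is exactly the asserted expression.

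The main obstacle I anticipate is not any deep inequality, since the bound $|\widetilde\mW_{ii}|\le 1$ and its reduction to the diagonal of $\mSigma$ is elementary, but rather the careful bookkeeping in the uniqueness argument. One must track precisely why uniqueness can fail in the rank-deficient case, where the entries of $\widetilde\mW$ associated with the zero singular values of $\mZ$ remain free, and thereby justify that the minimizer is unique exactly when $\mZ$ has full row rank.
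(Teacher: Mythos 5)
The paper itself gives no proof of this statement: it is quoted (with its normalization inconsistencies intact) from Tropp et al.\ as a known result, so there is no internal argument to compare yours against. Your proof is correct and self-contained, and it is the classical orthogonal-Procrustes/polar-decomposition argument: since $\|\mD\|_F^2 = \trace(\mD\mD^\T) = aN$ is constant on $\setD_a$, minimizing $\|\mD-\mZ\|_F^2$ is equivalent to maximizing $\trace(\mD^\T\mZ)$; rotating by the singular vectors reduces this to bounding the diagonal entries of a row-orthonormal matrix $\widetilde{\mW}$ by $1$, and your equality analysis correctly isolates full row rank as exactly the condition for uniqueness (the rows of $\widetilde{\mW}$ paired with zero singular values are otherwise unconstrained). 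This is essentially the same trace-maximization route used in the original source. Two bookkeeping points, both of which you already flagged: with the paper's definition $\setD_a = \{\mD : \mD\mD^\T = a\mId\}$, the minimizer is $\sqrt{a}\,\mU\mV^\T$ (compact SVD), not $a\mU\mV^\T$; the stated formula reflects Tropp's convention, in which ``$\alpha$-tight'' means $\mD\mD^\T = \alpha^2\mId$. Likewise the displayed identity $\alpha\mU\mV^\T = (\mZ\mZ^\T)^{-1/2}\mZ$ is missing a factor of $\alpha$ on the right-hand side; your computation $(\mZ\mZ^\T)^{-1/2}\mZ = \mU\mV^\T$ is the correct one. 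Neither discrepancy affects your argument, which proves the statement in the form consistent with the paper's own definition of $\setD_a$.
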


Another equivalent way to characterize the $a$-tight frames is via the eigen decomposition of the corresponding Gram matrices. To be precise, the frame $\mD$ is $a$-tight if and only if its Gram matrix $\mD^\T\mD$ has $N$ nonzero eigenvalues equal $a$, i.e.,
\[
\mD^\T\mD = \mV \bmat a\mId & \mzero \\ \mzero & \mzero \emat \mV^\T, \]
where $\mV$ is an $L\times L$ orthonormal matrix. Define a collection of Gram matrices corresponding to all $N\times L$ $a$-tight frames by
\begin{align}
\setG_a = \bigg\{\mG\in\R^{L\times L}: \mG = \mG^\T, \mG \text{ has eigenvalues} \ (\underbrace{a,\ldots,a}_{N},0,\ldots,0) \bigg\}.
\label{eq:define G}\end{align}
The following result provides a method to calculate a Gram matrix in $\setG_a$ that is closest to an arbitrary matrix in Frobenius norm.
\begin{thm}\cite[Theorem 3]{tropp2005designing} For any $L\times L $ Hermitian $\mZ$, let $\mZ = \mU\mLambda\mU^\T$ be its eigen-decomposition, where the diagonal entries of $\mLambda$ are arranged in the decreasing order. A nearest Gram matrix to $\mZ$ is given by
	\[
	a [\mU]_N[\mU]_N^\T \in \argmin_{\mG\in\setG_a}\|\mG - \mZ\|_F^2,
	\]
	where $[\mU]_N$ is the submatrix of $\mU$ obtained by taking the first $N$ columns of $\mU$.
	Furthermore, if $\lambda_N(\mZ)>\lambda_{N+1}(\mZ)$, then $a [\mU]_N[\mU]_N^\T$ is the unique Gram matrix in $\setG_a$ that is closest to $\mZ$.
\end{thm}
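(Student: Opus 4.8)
The plan is to recognize this as an orthogonal projection onto a spectrally-constrained matrix set and to reduce it to a trace maximization solved by Ky Fan's maximum principle. First I would observe that every $\mG\in\setG_a$ shares the same spectrum $(a,\ldots,a,0,\ldots,0)$ and hence the same Frobenius norm $\|\mG\|_F^2 = Na^2$; since $\|\mZ\|_F^2$ is also constant over the feasible set, expanding
\[
\|\mG - \mZ\|_F^2 = \|\mG\|_F^2 - 2\trace(\mG\mZ) + \|\mZ\|_F^2
\]
shows that minimizing $\|\mG-\mZ\|_F^2$ over $\setG_a$ is equivalent to maximizing $\trace(\mG\mZ)$ over $\setG_a$ (using that $\mG,\mZ$ are symmetric so $\trace(\mG^\T\mZ)=\trace(\mG\mZ)$).

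Next I would parametrize the feasible set. A real symmetric $L\times L$ matrix whose eigenvalues are $a$ repeated $N$ times and $0$ otherwise is exactly $a$ times an orthogonal projector of rank $N$, so $\mG = a\mW\mW^\T$ for some $\mW\in\R^{L\times N}$ with $\mW^\T\mW=\mId$, and conversely every such $\mW$ yields a feasible $\mG$. Then
\[
\trace(\mG\mZ) = a\,\trace(\mW\mW^\T\mZ) = a\,\trace(\mW^\T\mZ\mW),
\]
so the problem becomes $\max_{\mW^\T\mW=\mId}\trace(\mW^\T\mZ\mW)$, the classical Rayleigh--Ritz/Ky Fan problem.

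The main work is the inequality $\trace(\mW^\T\mZ\mW)\le \sum_{i=1}^N\lambda_i(\mZ)$, with the eigenvalues sorted in decreasing order. I would establish it either through von Neumann's trace inequality applied to the symmetric pair $(\mG,\mZ)$ --- which directly gives $\trace(\mG\mZ)\le\sum_i\lambda_i(\mG)\lambda_i(\mZ)=a\sum_{i=1}^N\lambda_i(\mZ)$ --- or by induction on $N$ using the variational characterization of eigenvalues. Equality is attained by taking the columns of $\mW$ to be orthonormal eigenvectors of $\mZ$ associated with its $N$ largest eigenvalues, i.e. $\mW=[\mU]_N$, which yields $\mG=a[\mU]_N[\mU]_N^\T$ and proves that this matrix is a minimizer. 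I expect this Ky Fan bound to be the crux of the argument; the reduction around it is routine bookkeeping.

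Finally, for the uniqueness claim under the gap $\lambda_N(\mZ)>\lambda_{N+1}(\mZ)$, I would trace through the equality condition. The matrix $\mG=a\mW\mW^\T$ depends on $\mW$ only through its column space $\Range(\mW)$, and equality in the Ky Fan bound forces $\Range(\mW)$ to be the span of eigenvectors realizing the top $N$ eigenvalues of $\mZ$. When $\lambda_N(\mZ)>\lambda_{N+1}(\mZ)$ this dominant $N$-dimensional eigenspace is uniquely determined, so $\mG=a[\mU]_N[\mU]_N^\T$ is the unique minimizer; absent the strict gap, the freedom to mix eigenvectors across the tie between the $N$-th and $(N+1)$-th eigenvalues produces distinct optimal projectors and destroys uniqueness.
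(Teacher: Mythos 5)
Your proposal is correct, but there is nothing in the paper to compare it against: the paper imports this statement verbatim from Tropp et al.\ as a cited result and supplies no proof of it. Your route --- expand $\|\mG-\mZ\|_F^2$, use that $\|\mG\|_F^2 = Na^2$ is constant on $\setG_a$, parametrize $\mG = a\mW\mW^\T$ with $\mW^\T\mW = \mId$, and invoke Ky Fan / von Neumann to get $\trace(\mW^\T\mZ\mW)\le\sum_{i=1}^N\lambda_i(\mZ)$ with equality exactly on the dominant eigenspace --- is a complete and standard argument, and your uniqueness discussion via the spectral gap is sound. The one step worth writing out carefully is the equality case: setting $t_i = \vu_i^\T\mW\mW^\T\vu_i \in [0,1]$ with $\sum_i t_i = N$, one has $\trace(\mW^\T\mZ\mW) = \sum_i \lambda_i(\mZ)\,t_i$, and the gap $\lambda_N > \lambda_{N+1}$ forces the unique maximizer $t_1=\cdots=t_N=1$, $t_{N+1}=\cdots=t_L=0$, i.e.\ $\Range(\mW)=\Span(\vu_1,\ldots,\vu_N)$; this makes the ``forces the range'' claim rigorous. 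A natural alternative proof, closer in spirit to the machinery this paper actually does use (see its appeal to the Wielandt--Hoffman theorem in \Cref{prop:G H close}), lower-bounds $\|\mG-\mZ\|_F^2 \ge \sum_{i=1}^N(a-\lambda_i(\mZ))^2 + \sum_{i=N+1}^L \lambda_i(\mZ)^2$ directly by Hoffman--Wielandt and checks that $a[\mU]_N[\mU]_N^\T$ attains this bound; the two approaches are essentially equivalent in effort, with yours making the equality/uniqueness analysis more transparent.
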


Frames are usually designed based on certain requirements according to different applications. In the following sections, we review several widely utilized structured frames and provide the convergence guarantee of the alternating projection method for designing such frames.

\subsection{Prescribed column norms \cite{tropp2005designing}}\label{sec:prescribed column norms}
As a first illustration example, we consider designing tight frame with prescribed column norms, which has been utilized in the context of constructing optimal signature sequences for DS-CDMA channels~\cite{tropp2005designing}. To that end, we let $\setS$ denote the structural constraint set containing matrices with the prescribed column norms:
\[
\setS :=\left\{\mS\in\R^{N\times L}: \|\vs_\ell\|_2^2= c_\ell,\ \forall \ell\in[L]\right\},
\]
where $c_1,\ldots,c_L$ are the squared column norms of the desired frames and $[L]$ represents the set $\{1,2,\ldots,L\}$. For example, in the DS-CDMA application, the column norms depend on the users¡¯ power constraints~\cite{tropp2005designing}. Let $\calP_{\setS}:\R^{N\times L}\rightarrow \R^{N\times L}$ denote the projection onto the set $\setS$. Similar to \eqref{eq:proj unit sphere}, $\calP_{\setS}$ acts as normalizing each column of the input matrix to the corresponding prescribed column norm:
\e
[\calP_{\setS}(\mZ)](:,n) = \left\{\begin{matrix}c_n \vz_n/\|\vz_n\|_2, & \vz_n\neq \vzero,\\ c_n \vu_n, & \vz_n = \vzero,\end{matrix}\right.
\label{eq:proj S}\ee
where $\vu_n$ represents an arbitrary unit vector.

If the matrix $\mD$ is $a$-tight with prescribed column norms, $a$ can only be $\frac{1}{N}\sum_{\ell=1}^L c_\ell$. To see this, from \eqref{eq:tight frame SVD}, we have
\[
\sum_{\ell=1}^Lc_\ell = \sum_{\ell = 1}^L\|\vd_\ell\|_2^2 = \|\mD\|_F^2 = \|\mU\bmat \sqrt{a}\mId & \vzero \emat \mV^\T\|_F^2 = Na.
\]
Thus, throughout this section, we let $
a = \frac{1}{N}\sum_{\ell=1}^L c_\ell.$
We now design a tight frame with prescribed column norms by solving the following nearest problem:
\e
\minimize_{\mD\in\setD_a,\mS\in\setS} ~g(\mD,\mS) = \|\mD - \mS\|_F^2,
\label{eq:min prescribed norm}
\ee
which can be solved by the alternating projection method (i.e., Algorithm~\ref{alg:APM} by setting $\setX = \setD_a$ and $\setY = \setS$). The convergence analysis for alternating projection method solving \eqref{eq:min prescribed norm} is provided in \cite[Theorem 6]{tropp2005designing}, which guarantees a subsequence convergence. The following result further provides sequence convergence guarantee for alternating projection method designing tight frames with prescribed column norms.
\begin{thm}\label{thm:convergence tight frames column constraint}
	Let $\{(\mD_k,\mS_k)\}\subset \setD_a \times \setS $ be the sequence of iterates generated by \Cref{alg:APM} (by setting $\setX = \setD_a$ and $\setY = \setS$)  for solving \eqref{eq:min prescribed norm} with an initialization $\mS_0$ that has full rank and nonzero columns. Then the sequence $\{(\mD_k,\mS_k)\}$ is convergent and converges to a certain critical point of \eqref{eq:min prescribed norm}.
\end{thm}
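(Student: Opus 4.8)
The plan is to verify that the two sets $\setX=\setD_a$ and $\setY=\setS$ fulfil every hypothesis of \Cref{assum} and then invoke \Cref{thm:main convergence} verbatim. Four items must be checked: (a) both sets are closed and semi-algebraic; (b) the generated sequence is bounded; (c) $\setS$ obeys the three-point property \eqref{eq:3point property}; and (d) $\setD_a$ obeys the local contraction property \eqref{eq: local contraction}. Items (a) and (b) are immediate. The sets $\setD_a$ and $\setS$ are cut out by the polynomial systems $\mD\mD^\T=a\mId$ and $\|\vs_\ell\|_2^2=c_\ell$, hence are closed real-algebraic (and so semi-algebraic) sets; moreover both are compact, since every $\mD\in\setD_a$ has $\|\mD\|_F^2=\trace(\mD\mD^\T)=Na$ and every $\mS\in\setS$ has $\|\mS\|_F^2=\sum_\ell c_\ell=Na$ by \eqref{eq:a to c}. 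Thus $\{(\mD_k,\mS_k)\}$ is automatically bounded, and the real work lies in (c) and (d).

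For (c), I would exploit the fact that $\setS$ is a product of spheres, each column lying on a sphere of radius $\sqrt{c_\ell}$, so that the single-sphere computation \eqref{eq:3point unit sphere} applies columnwise. With $\mS_k=\calP_\setS(\mD_k)$ this yields
\[ g(\mD_k,\mS_{k-1}) - g(\mD_k,\mS_k) = \sum_{\ell=1}^L \frac{\|[\mD_k](:,\ell)\|_2}{\sqrt{c_\ell}}\,\bigl\|[\mS_k](:,\ell)-[\mS_{k-1}](:,\ell)\bigr\|_2^2. \]
It then suffices to bound the column-norm ratios below uniformly. Using the projection formula $\mD_k=\calP_{\setD_a}(\mS_{k-1})=\sqrt{a}\,(\mS_{k-1}\mS_{k-1}^\T)^{-1/2}\mS_{k-1}$ (valid once $\mS_{k-1}$ has full row rank), one gets $\|[\mD_k](:,\ell)\|_2 \ge \sqrt{a}\,\sigma_{\max}(\mS_{k-1})^{-1}\sqrt{c_\ell}\ge \sqrt{c_\ell/N}$ because $\sigma_{\max}(\mS_{k-1})\le\|\mS_{k-1}\|_F=\sqrt{Na}$. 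Hence the right-hand side above is at least $\tfrac{1}{\sqrt N}\|\mS_k-\mS_{k-1}\|_F^2$, so \eqref{eq:3point property} holds with $\delta_\alpha$ equal to that difference and $\alpha=1/\sqrt N$. The supporting claim that $\mS_{k-1}$ always has full row rank (so the projection formula applies) I would prove by induction: $\mS_0$ has full rank by hypothesis, the bound just derived shows $\mD_k$ has no zero column, and rescaling columns preserves rank, so $\mS_k=\calP_\setS(\mD_k)$ again has full row rank.

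For (d), the projection onto $\setD_a$ is $\sqrt{a}$ times the orthogonal polar factor $\mU\mV^\T$ of its argument, so what I need is a Lipschitz bound for this polar factor along the trajectory. Two ingredients enter. First, a uniform lower bound on the smallest singular value of the iterates: since $\mD_{k-1}\in\setD_a$ has all singular values equal to $\sqrt{a}$, and therefore all column norms at most $\sqrt{a}$, the rescaling $\mS_{k-1}=\calP_\setS(\mD_{k-1})$ obeys $\sigma_{\min}(\mS_{k-1})\ge\sigma_{\min}(\mD_{k-1})\,\min_\ell \sqrt{c_\ell}/\|[\mD_{k-1}](:,\ell)\|_2\ge\sqrt{c_{\min}}$, where $c_{\min}=\min_\ell c_\ell>0$. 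Second, a standard perturbation bound for the orthogonal polar factor of the form $\|\mU_\mA\mV_\mA^\T-\mU_\mB\mV_\mB^\T\|_F\le \frac{2}{\sigma_{\min}(\mA)+\sigma_{\min}(\mB)}\|\mA-\mB\|_F$ for full-row-rank $\mA,\mB$. Combining the two gives \eqref{eq: local contraction} with $\beta=\sqrt{a/c_{\min}}$, in fact for an arbitrary $\eps$.

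I expect the polar-factor perturbation bound to be the main obstacle: it is the one genuinely matrix-analytic ingredient, and it must be combined correctly with the uniform lower bound $\sigma_{\min}(\mS_{k-1})\ge\sqrt{c_{\min}}$, which itself rests on the inductive full-rank/nonzero-column bookkeeping described above. Everything else is either elementary (the columnwise sphere identity, the compactness and semi-algebraicity) or a direct application of the projection characterizations for $\setD_a$ and $\setS$. Once (a)--(d) are in place, \Cref{thm:main convergence} applies and yields that $\{(\mD_k,\mS_k)\}$ converges to a critical point of \eqref{eq:min prescribed norm}, which is the assertion.
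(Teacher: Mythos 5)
Your proposal is correct, and it follows the same overall route as the paper: verify \Cref{assum} for $\setX=\setD_a$, $\setY=\setS$ (closedness and semi-algebraicity, boundedness, three-point property, local contraction) and then invoke \Cref{thm:main convergence}. The differences lie in how the two supporting estimates are obtained. For the trajectory bounds, the paper simply cites Tropp et al.'s \Cref{prop:TF norm} (lower bounds on the column norms of $\mD_k$ and on $\sigma_{\min}(\mS_k)$), whereas you derive both directly from the projection formulas $\calP_{\setD_a}(\mS)=\sqrt{a}\,(\mS\mS^\T)^{-1/2}\mS$ and the column-rescaling form of $\calP_{\setS}$, together with an induction maintaining full row rank; your bound $\|[\mD_k](:,\ell)\|_2\ge\sqrt{c_\ell/N}$ is in fact slightly sharper than the cited one, and it yields the cleaner three-point constant $\alpha=1/\sqrt{N}$, where the paper gets $c_{\min}/(c_{\max}\sqrt{\sum_\ell c_\ell})$ (modulo a units inconsistency in the paper between $c_\ell$ as a norm and as a squared norm, which your $\sqrt{c_\ell}$ convention resolves correctly). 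For the local contraction property, the paper gives a self-contained argument: it stacks the SVD factors of the two iterates and applies its trace inequality \Cref{prop:AWB to AB}, while you invoke a standard perturbation bound for the orthogonal polar factor (an R.-C.~Li-type theorem, applied to full-row-rank matrices by transposition); both give essentially the same constant $\beta=\sqrt{a/c_{\min}}$ with $\eps$ arbitrary, i.e.\ a global rather than merely local contraction. So your proof makes self-contained exactly the step the paper outsources (the column-norm and singular-value bounds) and outsources exactly the one matrix-analytic step the paper proves from scratch; the outsourced bound is a genuine known theorem, so this is legitimate, but it is the one ingredient you should cite precisely rather than call standard, and (as in the paper) the first step involving $\mS_0$ needs the separate observation that $\sigma_{\min}(\mS_0)>0$ by the full-rank hypothesis.
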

\begin{proof}[Proof of \Cref{thm:convergence tight frames column constraint}]
	First note that both $\setD_a$ and $\setS$ are closed semi-algebraic sets. Invoking \Cref{thm:main convergence}, we prove \Cref{thm:convergence tight frames column constraint} by showing that the sequence $\{(\mD_k,\mS_k)\}$ is bounded and  establishing the  three-point property \eqref{eq:3point property} and local contraction property \eqref{eq: local contraction}. Due to the fact that $\mS_k\in\setS$ and $\mD\in\setD_a$, the sequence $\{(\mD_k,\mS_k)\}$ is bounded. Define
	\[
	c_{\min}=\min_{\ell}c_\ell, \ c_{\max} = \max_{\ell}c_\ell.
	\]
	
	%To show the  three-point property \eqref{eq:3point property} and local contraction property \eqref{eq: local contraction}, we require to identify subsets $\overline\setD_a\subset\setD_a$ and $\overline\setS\subset\setS$ such that $(\mD_k,\mS_k)\in \overline\setD_a \times \overline\setS$. Towards that end,
	
	We first review the following useful results which provide  lower bounds on the norm of each column of $\mD_k$ and the smallest singular value of $\mS_k$.
	\begin{prop}\cite[Proposition 23]{tropp2005designing} \label{prop:norm d neq 0} Assume that the initialization $\mS_0\in\setS$ is  full rank. Then, for any $k\geq 1$, we have $(i)$ the Euclidean norm of each column of $\mD_k$ is at least $\frac{c_{\min}}{\sqrt{\sum_{\ell}c_\ell}}$; and $(ii)$ the smallest singular value of $\mS_k$ is at least $\sqrt{c_{\min}}$.
		\label{prop:TF norm}\end{prop}
	As a consequence of \Cref{prop:TF norm}, define the subsets
	\begin{align}
	&\overline \setD_a =\left\{\mD\in\R^{N\times L}: \mD\mD^\T =a \mId, \ \|\vd_\ell\|_2 \geq \frac{c_{\min}}{\sqrt{\sum_{\ell}c_\ell}}, \ \forall \ell\in[L]\right\},\label{eq:bar Da}\\
	&\overline\setS =\left\{\mS\in\R^{N\times L}: \sigma_{\min}(\mS)\geq \sqrt{c_{\min}} , \|\vs_\ell\|_2^2= c_\ell,\ \forall \ell\in[L]\right\}.\label{eq:bar S}
	\end{align}
	In words, \Cref{prop:TF norm} indicates that the sequence $\{\mS_k\}_{k\geq 1}$ lies in $\overline\setS$ (a compact subset of $\setS$ whose elements have full rank), while the sequence $\{\mD_k\}_{k\geq 1}$ lies in $\overline\setD_a$ (a compact subset of $\setD_a$ whose elements have non-zero columns). The following lemma establishes the three-point property for \eqref{eq:min prescribed norm} using \Cref{alg:APM}.
	\begin{lem}\label{lem:3 point property for prescribed  norm}
		(three-point property) %Under the same setup as in \Cref{thm:convergence tight frames column constraint}, we have
		%	\[g(\mD_k,\mS_{k-1}) - g(\mD_k,\mS_k) \geq \frac{c_{\min}}{\revise{c_{\max}}\sqrt{\sum_{\ell}c_\ell}}\|\mS_{k-1}- \mS_k\|_F^2, \ k\geq 1.\]
		{For any $\mS\in\overline\setS$ and $\widetilde \mD \in\overline\setD_a$, we have
			\[
			g(\widetilde \mD,\mS) - g(\widetilde \mD,\widetilde \mS) \geq \frac{c_{\min}}{{c_{\max}}\sqrt{\sum_{\ell}c_\ell}}\|\mS- \widetilde \mS\|_F^2, \ k\geq 1,
			\]
			where $\widetilde \mS = \calP_{\setS}(\widetilde \mD)$ is unique according to the definition of $\calP_{\setS}$ in \eqref{eq:proj S}.
		}		
	\end{lem}
	\begin{proof}[Proof of \Cref{lem:3 point property for prescribed  norm}]
		First note that for any nonzero $\widetilde\vd\in\R^N,\widetilde\vs = c\frac{\widetilde\vd}{\|\widetilde\vd\|_2}$ and any $\vs\in\R^N$ with $\|\vs\|_2 = c$, it follows from \eqref{eq:3point unit sphere} that $
		\|\widetilde \vd - \vs\|_2^2 - \| \widetilde\vd - \widetilde\vs\|_2^2 =\frac{1}{c}\|\widetilde\vd\|_2\|\widetilde\vs - \vs\|_2^2.$
		{Since $\widetilde \mD\in \overline\setD_{\alpha}, \mS\in \overline\setS$ and $\widetilde \mS= \calP_{\setS}(\widetilde \mD)$, we obtain
			\begin{align*}
			&g(\widetilde \mD,\mS) - g(\widetilde \mD,\widetilde \mS) = \|\widetilde \mD- \mS\|_F^2 - \|\widetilde \mD- \widetilde \mS\|_F^2\\
			& \geq \min_\ell \frac{\|\widetilde \mD(:,\ell)\|_2}{c_\ell} \|\widetilde \mS- \mS\|_F^2 \geq \frac{c_{\min}}{c_{\max}\sqrt{\sum_{\ell=1}^Lc_\ell}} \|\widetilde \mS- \mS\|_F^2,
			\end{align*}
			where the last line utilizes \eqref{eq:bar Da} that $\|\widetilde\mD(:,\ell)\|_2 \geq \frac{c_{\min}}{\sqrt{\sum_{\ell=1}^Lc_\ell}}$ for all $\ell\in[L]$.}
	\end{proof}

	On the other hand, the following result establishes the local contraction property.
	\begin{lem}\label{lem:TF norm local conc}
		(local contraction property)
		\begin{comment}Under the same setup as in \Cref{thm:convergence tight frames column constraint}, we have
		\e \label{eq:TF norm local contraction}
		\left\| \calP_{\setD_a}(\mS_{k+1}) - \calP_{\setD_a} (\mS_{k})\right\|_F \leq \frac{1}{N\sqrt{c_{\min}}}\sum_{\ell=1}^L c_\ell \cdot \left\|\mS_{k+1} - \mS_k\right\|_F, \ \forall k\geq 1.
		\ee
		\end{comment}
		{For any $\mS, \widetilde \mS \in \overline\setS$, we have
			\e \label{eq:TF norm local contraction}
			\left\| \calP_{\setD_a}(\widetilde \mS) - \calP_{\setD_a} (\mS)\right\|_F \leq \frac{1}{N\sqrt{c_{\min}}}\sum_{\ell=1}^L c_\ell \cdot \left\|\widetilde \mS - \mS\right\|_F.
			\ee
		}
	\end{lem}
	\begin{proof}[Proof of \Cref{lem:TF norm local conc}]
		We first give out the following useful result.
		\begin{prop} For any $\mA\in\R^{N\times L}, \mB\in\R^{L\times L}$ and $L\times L$ diagonal matrix $\mW$ with positive diagonals $w_1,w_2,\ldots,w_L$, we have $
			\left\| \mA \mW\mB\right\|_F \geq \frac{ \left\| \mA\mB\right\|_F }{\min\limits_{\ell}w_\ell}.$
			\label{prop:AWB to AB}\end{prop}
		\begin{proof}[Proof of \Cref{prop:AWB to AB}]
			First note that
			\begin{align*}
			\left\| \mA \mW\mB\right\|_F^2 = \trace\left(\mA^\T\mA\mW\mB\mB^\T\mW \right) \geq \min\limits_{\ell}w_\ell \cdot \trace\left(\mA^\T\mA\mW\mB\mB^\T \right),
			\end{align*}
			where the last line follows because $\mA^\T\mA\mW\mB\mB^\T $ is a PSD matrix and hence its diagonals are all nonnegative. Similarly,
			\begin{align*}
			&\trace\left(\mA^\T\mA\mW\mB\mB^\T \right)  \geq \min\limits_{\ell}w_\ell \cdot \trace\left(\mA^\T\mA\mB\mB^\T \right) = \min\limits_{\ell}w_\ell \cdot \|\mA\mB\|_F^2.
			\end{align*}
		\end{proof}
		
		{Let $\mS = \mU\mSigma\mV^\T$ and $\widetilde\mS = \widetilde\mU \widetilde\mSigma\widetilde\mV^{\T}$ be the SVD of $\mS$ and $\widetilde\mS$, respectively. Then, we have
			\begin{align*}
			&\| \calP_{\setD_a}(\mS) - \calP_{\setD_a} (\widetilde\mS)\|_F  = \sqrt{a}\left\|\bmat \mU & \widetilde\mU \emat \bmat \mV & -\widetilde\mV \emat^\T\right\|_F  \\ &\leq \sqrt{a} \frac{\left\|\bmat \mU & \widetilde\mU \emat \bmat \mSigma & \\ & \widetilde\mSigma \emat \bmat \mV & -\widetilde\mV \emat^\T\right\|_F}{\min\{\sigma_{\min}(\mS),\sigma_{\min}(\widetilde\mS)\}} \leq \sqrt{a}\frac{\|\mU\mSigma\mV^\T-  \widetilde\mU \widetilde\mSigma\widetilde\mV^{\T}\|_F}{\sqrt{c_{\min}}},
			\end{align*}
		}
		where the first inequality follows from \Cref{prop:AWB to AB} and the last line utilizes  \Cref{prop:TF norm} that the smallest singular value of both $\mS$ and $\widetilde \mS$ is at least $\sqrt{c_{\min}}$. The proof is finished by invoking $a =  \frac{1}{N}\sum_{\ell=1}^L c_\ell$.
	\end{proof}
	This completes the proof for \Cref{thm:convergence tight frames column constraint}.
\end{proof}

\begin{remark}Compared with \cite[Theorem 6]{tropp2005designing} which has the same assumption as \Cref{thm:convergence tight frames column constraint} but  only guarantees the subsequence convergence property of $\{(\mD_k,\mS_k)\}$, \Cref{thm:convergence tight frames column constraint} reveals that the sequence $\{(\mD_k,\mS_k)\}$ generated by the alternating projection method is convergent and converges to a critical point of \eqref{eq:min prescribed norm}. Moreover, once the KL exponent $\theta$ for the objective function of \eqref{eq:min prescribed norm} is available, we can also obtain the convergence rate of the alternating projection method. The KL exponent for quadratic optimization with orthogonality constraints is explicitly given in~\cite{liu2016quadratic,liu2017quadratic}. It is expected that the objective function of \eqref{eq:min prescribed norm} has similar KL component as the ones considered in \cite{liu2016quadratic,liu2017quadratic}.
\end{remark}

\subsection{Equiangular Tight Frames}
As another example, we consider designing tight frame with another important property, the mutual coherence which is defined as
\[
\mu(\mD):=\max_{1\leq i\neq j\leq L} \frac{|\vd_i^\T\vd_j|}{\|\vd_i\|_2\|\vd_j\|_2}
\]
for all $\mD\in\R^{N\times L}$. The mutual coherence $\mu(\mA)$ measures the maximum linear dependency possibly achieved by any two columns of the frame $\mD$. A tight frame with lower mutual coherence has proved to be useful in communication and signal processing, such as sensing matrix design \cite{elad2007optimized,li2013projection,tsiligianni2014construction,li2015designing} and dictionary learning \cite{barchiesi2013learning}.

For any frame $\mD\in\R^{N\times L}$, it is well-known that its mutual coherence is lower bounded by \cite{strohmer2003grassmannian} $
\mu(\mD) \geq  \sqrt{\frac{L-N}{N(L-1)}} =: \xi,$
where the equality holds if and only if $\mD$ is a tight frame and is equiangular, i.e., $
\frac{|\vd_i^\T\vd_j|}{\|\vd_i\|_2\|\vd_j\|_2} = \frac{|\vd_m^\T\vd_n|}{\|\vd_m\|_2\|\vd_n\|_2}$
for all $i\neq j, m\neq n$. With normalized columns, we define an {\em equiangular tight frame} to be a unit-norm tight frame (i.e, each column has unit-norm) in which each pair of vectors has the same absolute inner product.

Equiangular tight frame not only obeys the Parseval's identity property that orthonormal basis has,  but also has equiangular property that orthonormal basis possess (i.e., the inner product between any pari of columns in an orthonormal basis is 0). Though equiangular tight frame has such nice properties, in general, it is not easy to find equiangular tight frames. In particular, equiangular tight frames only exist for rare combinations of $N$ and $L$. For example,  $\mD\in\R^{N\times L}$is an equiangular tight fram only if $L\leq \frac{1}{2}N(N+1)$ \cite{tropp2005designing}.

In \cite{tropp2005designing}, the authors constructed equiangular tight frames using alternating projection method. To briefly mention the main idea,  we note that in an equiangular tight frame, each vector has unit norm and the correlation between any pair of vectors is no larger than $\xi$. Thus, it is easy to first work on the Gram matrix $\mD^\T\mD$ as it displays all of the inner product of the columns within $\mD$. Once we obtain a suitable Gram matrix, it is straightforward to extract the corresponding frame. To that end, define the set of Gram matrices of relaxed equiangular tight frames as
\begin{align}
\setH_{\xi} = \bigg\{\mH\in\R^{L\times L}: \mH = \mH^\T, \diag(\mH) = \vone, \max_{i\neq j}|\mH(i,j)|\leq \xi \bigg\},
\label{eq:define_H}\end{align}
which characterizes the equiangular property. For unit-norm tight frames $\mD$,  we have $\mD$ is $a$-tight with $a =  \frac{1}{N}\sum_{\ell=1}^L c_\ell = L/N$. Thus, throughout this section, we set $a = L/N$. Noting that
the set $\setG_a$ defined in \eqref{eq:define G} characterizes the Parseval's identity property, Tropp et al.~\cite{tropp2005designing} attempted to design equiangular tight frames by solving the following matrix nearest problem
\e \label{eq: min ETF}
\minimize_{\mG\in\setG_a,\mH\in\setH_{\xi}} \|\mG - \mH\|_F^2
\ee
which is addressed by the alternating projection method (i.e., Algorithm \ref{alg:APM} by setting $\setX = \setG_{a}, \setY = \setH_{\xi}$).

\cite[Theorem 9]{tropp2005designing}  provides convergence analysis of alternating projection method solving \eqref{eq: min ETF} and reveals the subsequence convergence property (i.e., any limit point of the iterates is a critical point of \eqref{eq: min ETF}). The following theorem provides new convergence guarantee for designing equiangular tight frames via  alternating projection.
\begin{thm}\label{thm:convergence ETF}
	%	Let $\{(\mG_k, \mH_k)\} \subset \setG_a \times \setH_{\xi} $ be the sequence of iterates generated by \Cref{alg:APM} (by setting $\setX = \setG_{a}, \setY = \setH_{\xi}$)  for solving \eqref{eq: min ETF} with initialization $\mG_0\in\setG_a$ and $\mH_0\in\setH_\xi$ satisfying $\|\mG_0 - \mH_0\|_F^2 \leq {L^2}/(2N^2) -\nu$, where $\nu>0$.   Then the sequence $\{(\mG_k, \mH_k)\}$ is convergent and converges to a  critical point of \eqref{eq: min ETF}.
	
	Let $\{(\mG_k, \mH_k)\} \subset \setG_a \times \setH_{\xi} $ be the sequence of iterates generated by \Cref{alg:APM} (by setting $\setX = \setG_{a}, \setY = \setH_{\xi}$)  for solving \eqref{eq: min ETF}. {Suppose there exist an integer $\widehat k$ and a  constant $\nu>0$ such that $\|\mG_{\widehat k} - \mH_{\widehat k}\|_F^2 \leq {L^2}/(2N^2) -\nu$.}   Then the sequence $\{(\mG_k, \mH_k)\}$ is convergent and converges to a certain critical point of \eqref{eq: min ETF}.
\end{thm}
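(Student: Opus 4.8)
The plan is to verify the three hypotheses of \Cref{thm:main convergence} for the choice $\setX=\setG_a$, $\setY=\setH_{\xi}$ and then invoke that theorem. Two of the requirements are immediate. Both sets are closed and semi-algebraic, being cut out of the symmetric matrices by polynomial equalities and inequalities ($\setG_a$ by its prescribed spectrum, $\setH_{\xi}$ by $\diag(\mH)=\vone$ and $|\mH(i,j)|\le\xi$). The sequence is bounded because every $\mG_k\in\setG_a$ has the fixed spectrum $(a,\dots,a,0,\dots,0)$, so $\|\mG_k\|_F^2=Na^2$, while every $\mH_k\in\setH_{\xi}$ has unit diagonal and entries bounded by $\xi$.

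Next I would establish the three-point property of $\setH_{\xi}$. The defining constraints of $\setH_{\xi}$ are all linear, so $\setH_{\xi}$ is a closed convex set, and \eqref{eq:3point convex set} gives \eqref{eq:3point property} with $\delta_\alpha(\mH_k,\mH_{k-1})=\|\mH_k-\mH_{k-1}\|_F^2$ and $\alpha=1$. A crucial by-product, whose derivation uses only the three-point property and the definition $\mG_k=\calP_{\setG_a}(\mH_{k-1})$ (and hence is not circular with the local contraction property still to be proved), is the sufficient decrease of \Cref{lem:sufficient decrese}(i): the objective $g(\mG_k,\mH_k)=\|\mG_k-\mH_k\|_F^2$ is monotonically non-increasing. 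Consequently $g(\mG_k,\mH_k)$ never exceeds the initial value $\|\mG_0-\mH_0\|_F^2\le L^2/(2N^2)-\nu$, and since $\mG_{k+1}=\calP_{\setG_a}(\mH_k)$ is the nearest point, $\dist(\mH_k,\setG_a)^2=\|\mH_k-\mG_{k+1}\|_F^2\le g(\mG_k,\mH_k)\le L^2/(2N^2)-\nu$ for every $k$.

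The core of the proof is the local contraction property of $\setG_a$, and this is exactly where the initialization bound is used. Writing $\mH_k=\mU\mLambda\mU^\T$ with eigenvalues $\lambda_1\ge\cdots\ge\lambda_L$, the projection formula of \cite[Theorem 3]{tropp2005designing} gives $\dist(\mH_k,\setG_a)^2=\sum_{i=1}^N(\lambda_i-a)^2+\sum_{i>N}\lambda_i^2$, whence in particular $(\lambda_N-a)^2+\lambda_{N+1}^2\le L^2/(2N^2)-\nu$. The point is that the threshold $L^2/(2N^2)=a^2/2$ is precisely the value of $(\lambda_N-a)^2+\lambda_{N+1}^2$ at the crossover $\lambda_N=\lambda_{N+1}=a/2$, so the strict bound forbids the top and bottom blocks of the spectrum from merging. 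Quantitatively, minimizing $\lambda_N-\lambda_{N+1}$ over the disk $(\lambda_N-a)^2+\lambda_{N+1}^2\le D^2$ with $D^2=L^2/(2N^2)-\nu$ (the minimizer is $(\lambda_N,\lambda_{N+1})=(a-D/\sqrt2,\,D/\sqrt2)$, which respects $\lambda_N\ge\lambda_{N+1}$ since $D<a/\sqrt2$) yields a uniform spectral gap
\en
\lambda_N(\mH_k)-\lambda_{N+1}(\mH_k) \ge a-\sqrt2\,D = \frac{L}{N}-\sqrt{\frac{L^2}{N^2}-2\nu} =: \gamma > 0
\een
valid for all $k$.

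With this uniform gap secured, I would finish by a perturbation argument. The projector $\calP_{\setG_a}(\mH)=a\,\Pi_N(\mH)$, where $\Pi_N(\mH)$ is the orthogonal projector onto the top-$N$ eigenspace, is Lipschitz along the iterates by the Davis--Kahan $\sin\Theta$ theorem: $\|\Pi_N(\mH_k)-\Pi_N(\mH_{k-1})\|_F\le (C/\gamma)\,\|\mH_k-\mH_{k-1}\|_F$ whenever $\|\mH_k-\mH_{k-1}\|_F$ is small enough that the perturbation cannot close the gap $\gamma$ (both $\mH_k$ and $\mH_{k-1}$ enjoy the gap). This is exactly \eqref{eq: local contraction} with $\beta=aC/\gamma$ and a suitably small $\eps$, so all hypotheses of \Cref{thm:main convergence} hold and $\{(\mG_k,\mH_k)\}$ converges to a critical point of \eqref{eq: min ETF}. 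I expect the main obstacle to be the third step: converting the initialization inequality into the quantitative gap $\gamma$ and arguing that the distance bound rules out the spectral crossover; once the gap is in hand the Davis--Kahan step is routine perturbation theory.
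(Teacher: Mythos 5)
Your proposal is correct and follows essentially the same route as the paper's proof: verify boundedness and semi-algebraicity, get the three-point property from convexity of $\setH_\xi$, use the monotone decrease of the objective to propagate the initialization bound to all iterates, convert that bound into a uniform gap $\lambda_N(\mH_k)-\lambda_{N+1}(\mH_k)>0$ (the paper does this via the Wielandt--Hoffman inequality, yielding $\nu/a$; your disk-minimization gives the slightly sharper $a-\sqrt{2}D$, but it is the same idea), and then obtain the local contraction property from a Davis--Kahan/Wedin-type eigenvector perturbation bound, exactly as in the paper's \Cref{lem:ETF local concontraction}. No gaps; the argument is sound.
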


\begin{proof}[Proof of \Cref{thm:convergence ETF}]
	It is clear that both $\setG_a$ and $\setH_{\xi}$ are compact and semi-algebraic sets, hence the sequence  $\{(\mG_k, \mH_k)\}$ is bounded. According to \Cref{thm:main convergence}, the remaining task is to establish the three-point property \eqref{eq:3point property} and the local contraction property \eqref{eq: local contraction}. To that end, we first give out a useful result characterizing the gap between the $N$-th and $(N+1)$-th eigenvalues of $\mH_k$.% when $k$ is large enough.
	\begin{prop}\label{prop:G H close}
		Under the same setup as in \Cref{thm:convergence ETF}, we have
		\[
		\lambda_N(\mH_k) - \lambda_{N+1}(\mH_k)  \geq \frac{\nu}{a}, \ \forall \ {k\geq \widehat k}.
		\]
	\end{prop}
	
	\begin{proof}[Proof of \Cref{prop:G H close}] Noting that $a = L/N$, it follows from the assumption that $\|\mG_{k} - \mH_{k}\|_F^2 \leq \frac{a^2}{2} - \nu$ for all $k\geq \widehat k$. For any $L\times L$ symmetric matrix $\mA$, suppose its $N$-th and $(N+1)$-th eigenvalues are $\tau$ and $\tau - \varrho$, where $\varrho\geq 0$. Since the $N$-th and $(N+1)$-th eigenvalue of a matrix in $\setG_a$ are $a$ and zero, the Wielandt-Hoffman theorem shows that
		\begin{align*}
		\dist(\mA,\setG_a)^2 &\geq (a - \tau)^2 + (\tau-\varrho)^2 = 2\left(\tau - \frac{a+\varrho}{2}\right)^2 + \frac{a^2}{2} + \frac{\varrho^2}{2} - a \varrho\geq \frac{a^2}{2} - a \varrho,
		\end{align*}
		which together with $\|\mG_{k} - \mH_{k}\|_F^2 \leq \frac{\alpha^2}{2} - \nu$ implies that $
		\varrho \geq \frac{\nu}{a}. $
		Thus, for all $k\geq \widehat k$, the gap between the $N$-th and $(N+1)$-th eigenvalues of $\mH_k$ is at least $\frac{\nu}{a}$.		\end{proof}
	
	{As a consequence of \Cref{prop:TF norm}, define the subset
		\begin{align}
		\overline\setH_\xi =\left\{\mH\in\setH_\xi: \lambda_N(\mH) - \lambda_{N+1}(\mH)  \geq \frac{\nu}{a} \right\}.\label{eq:bar S}
		\end{align}
		In words, \Cref{prop:G H close} indicates that the sequence $\{\mH_k\}_{k\geq \widehat k}$ lies in $\overline\setH_\xi$.
	} {To verify the three-point property and local contraction property in \Cref{assum}, we set $\overline \setG_a = \setG_a$.}
	Now noting that $\setH_\xi$ is a closed convex set, it following from \eqref{eq:3point convex set} that the three-point property holds. {In particular, for any $\widetilde\mG\in\setG_a,\mH\in \setH_{\xi}$, we have
		\[
		g(\widetilde\mG,\mH) - g(\widetilde\mG,\widetilde\mH) \geq \|\mH - \widetilde\mH\|_F^2,	\]
		where $\widetilde\mH= \calP_{\setH_\xi}(\widetilde\mG)$. Note that this is slightly stronger than the three-point property required in \Cref{assum} since the above holds for any $\mH \in \setH_\xi$. The rest of proving \Cref{thm:convergence ETF} is to estiablish the local contraction property.}
	\begin{lem}\label{lem:ETF local concontraction}
		{	(local contraction property) Denote by $\mu =\frac{\nu}{2a}$. Then  there exist $0<\epsilon\leq \mu$ such that  for any $\mH, \widetilde\mH \in\overline \setH_\xi$ with $\|\mH-\widetilde\mH\|_F\leq \epsilon$, the following holds
			\e \label{eq:ETF local contraction}
			\left\| \calP_{\setG_a}(\mH) - \calP_{\setG_a} (\widetilde\mH)\right\|_2 \leq  \frac{\sqrt{2}a}{\mu} \left\|\mH - \widetilde\mH\right\|_F.
			\ee
		}
	\end{lem}
	\begin{proof}[Proof of \Cref{lem:ETF local concontraction}]
		{	
			Let $\mU_{\mH}$ consists of the eigenvectors of $\mH$ corresponding to the eigenvalues $\lambda_1(\mH)\geq \cdots \ge \lambda_L(\mH)$. Similar notation applies to $\mU_{\widetilde\mH}$. For any $\mD\in\R^{L\times L}$, we use $[\mD]_N$ to denote the submatric of $\mD$ obtained by keeping the first $N$ columns of $\mD$. Due to \Cref{prop:G H close}, we have $\lambda_N(\mH) - \lambda_{N+1}(\mH) \geq 2\mu$ and $\lambda_N(\widetilde\mH) - \lambda_{N+1}(\widetilde\mH) \geq 2\mu$. It then follows that
			\begin{align*}
			\calP_{\setG_a}(\mH) = a [\mU_{\mH}]_N [\mU_{\mH}]_N^\T,\ \calP_{\setG_a}(\widetilde\mH) = a [\mU_{\widetilde\mH}]_N [\mU_{\widetilde\mH}]_N^\T.
			\end{align*}
			Choosing $0<\eps \leq \mu$ and by Weyl's inequality we have
			\[
			\lambda_{N}(\widetilde\mH) - \lambda_{N+1}(\mH) \geq \lambda_{N}(\mH)  + \lambda_{\min}(\widetilde\mH-\mH)- \lambda_{N+1}(\mH)
			\geq \mu.
			\]
			Now invoking the perturbation bounds for eigenvectors\cite{wedin1972perturbation}, one has
			\e
			\left\|(\mId - [\mU_{\mH}]_N[\mU_{\mH}]_N^\T)[\mU_{\widetilde\mH}]_N \right\|_F^2 \leq \frac{\|(\mH - \widetilde\mH)[\mU_{\widetilde\mH}]_N\|_F^2}{\mu^2}.
			\label{eq:perturbation eig vector}\ee
			We prove \eqref{eq:ETF local contraction} by connecting it with \eqref{eq:perturbation eig vector}. To that end, note that
			\begin{align*}
			&\left\| \calP_{\setG_a}(\mH) - \calP_{\setG_a} (\widetilde\mH)\right\|_F^2 = \left\|a [\mU_{\mH}]_N [\mU_{\mH}]_N^\T - a [\mU_{\widetilde\mH}]_N [\mU_{\widetilde\mH}]_N^\T\right\|_F^2\\
			& = 2a^2 \left( \left\|[\mU_{\widetilde\mH}]_N\right\|_F^2 - \trace\left( [\mU_{\widetilde\mH}]_N^\T [\mU_{\mH}]_N [\mU_{\mH}]_N^\T [\mU_{\widetilde\mH}]_N\right)\right)\\
			& = 2a^2 \left( \left\|[\mU_{\widetilde\mH}]_N\right\|_F^2 - 2\trace\left( [\mU_{\widetilde\mH}]_N^\T [\mU_{\mH}]_N [\mU_{\mH}]_N^\T [\mU_{\widetilde\mH}]_N\right) \right) \\
			&\quad +2a^2 \left\|[\mU_{\mH}]_N[\mU_{\mH}]_N^T[\mU_{\widetilde\mH}]_N\right\|_F^2\\
			& = 2a^2 \left\|(\mId - [\mU_{\mH}]_N[\mU_{\mH}]_N^\T)[\mU(\widetilde\mH)]_N \right\|_F^2,
			\end{align*}
			where the thrid equation utilizes the fact that \[
			\trace\left( [\mU_{\widetilde\mH}]_N^\T [\mU_{\mH}]_N [\mU_{\mH}]_N^\T [\mU_{\widetilde\mH}]_N\right) = \left\|[\mU_{\mH}]_N[\mU_{\mH}]_N^T[\mU_{\widetilde\mH}]_N\right\|_F^2.
			\]
			By noting that $
			\|(\mH - \widetilde\mH)[\mU_{\widetilde\mH}]_N\|_F^2 \le \|\mH - \widetilde\mH\|_F^2,$
			we finally get \eqref{eq:ETF local contraction}.	
		}
	\end{proof}
	This completes the proof of \Cref{thm:convergence ETF}.
\end{proof}

\begin{comment}
Note that \Cref{thm:convergence ETF} requires a pair of initialization $\mG_0\in\setG_a$ and $\mH_0\in\setH_\xi$ satisfying $\|\mG_0 - \mH_0\|_F^2 < {L^2}/(2N^2)$. On the other hand, if the alternating projection method is started with other initialization (like a pair of randomly picked matrices $\mG_0\in\setG_a$ and $\mH_0\in\setH_\xi$), then this condition $\|\mG_k - \mH_k\|_F^2 < {L^2}/(2N^2)$ can be severed as indicator that the sequence is convergent. As a direct consequence of \Cref{thm:convergence ETF}, this is formally illustrated in the following result.
\begin{cor}\label{cor:convergence ETF}
Let $\{(\mG_k, \mH_k)\} \subset \setG_a \times \setH_{\xi} $ be the sequence of iterates generated by \Cref{alg:APM} (by setting $\setX = \setG_{a}, \setY = \setH_{\xi}$)  for solving \eqref{eq: min ETF}. Suppose there exist an integer $\widehat k$ and a  constant $\nu>0$ such that $\|\mG_{\widehat k} - \mH_{\widehat k}\|_F^2 \leq {L^2}/(2N^2) -\nu$.   Then the sequence $\{(\mG_k, \mH_k)\}$ is convergent and converges to a certain critical point of \eqref{eq: min ETF}.
\end{cor}
\end{comment}

\begin{remark} As also used in \cite[Theorem 9]{tropp2005designing},  the condition $\|\mG_k - \mH_k\|_F^2 < {L^2}/(2N^2)$ can be severed as indicator that the sequence is convergent. \Cref{thm:convergence ETF} improves upon \cite[Theorem 9]{tropp2005designing} in that it shows the sequence of iterates is convergent under the assumption $\|\mG_{\widehat k} - \mH_{\widehat k}\|_F^2< \frac{L^2}{2N^2}$ for some $\widehat k$, while \cite[Theorem 9]{tropp2005designing} only shows the subsequence convergence property of the iterates, i.e., the iterates has at least one convergent subsequence and the limit point of any convergent subsequence is a critical point. Note that the convergence analysis in \cite{tropp2005designing} is based on the assumption that $\calP_{\setG_a}$ has a unique projection, which is not enough for our analysis. For example, for any $\mH$ and $\widetilde \mH$ such that $\calP_{\setG_a}(\mH)$ and $\calP_{\setG_a}(\widetilde \mH)$ are unique, we are not guaranteed that $\|\calP_{\setG_a}(\mH) - \calP_{\setG_a}(\widetilde \mH)\|_F^2$ is upper bounded by $\|\mH - \widetilde \mH\|_F^2$. To be more precise, let $\lambda_N(\mH) = \tau + \eps$, $\lambda_{N+1}(\mH) = \tau - \eps$ and also let $\widetilde \mH$ have the same eigenvalues as $\mH$ and the same eigenvectors as $\mH$ except the $N$-th eigenvector and $(N+1)$-th eigenvector of $\widetilde \mH$ are the $(N+1)$-th eigenvector and $N$-th eigenvector of $\mH$, respectively. Now we have $\|\mH - \widetilde \mH\|_F^2 = 4\eps^2$ which can be arbitrary small when $\eps$ is very small. On the other hand, $\|\calP_{\setG_a}(\mH) - \calP_{\setG_a}(\widetilde \mH)\|_F^2 = 4(\tau + \eps)^2\approx 4\tau^2$ when $\eps$ is very small.  However, \Cref{lem:ETF local concontraction} ensures that when $\|\mH - \widetilde \mH\|_F^2$ is small enough, then $\|\calP_{\setG_a}(\mH) - \calP_{\setG_a}(\widetilde \mH)\|_F^2$ is also very small given that $\calP_{\setG_a}(\mH)$ and $\calP_{\setG_a}(\widetilde \mH)$ are unique.
\end{remark}

\section{Conclusion}
In this paper, we have provided certain conditions for proper, lower semi-continuous and semi-algebraic sets under which the sequence generated by the alternating projection method is convergent and converges to a critical point. In particular, the convergence is guaranteed by  utilizing the Kurdyka-\L{ojasiewicz} (KL) property and the notion of the three-point property and the local contraction property. As a byproduct, we utilized our new analysis framework to get the linear convergence rate of alternating projection onto closed convex sets. Our new analysis framework has also been utilized to ensure the convergence of alternating projection method for designing structured tight frames. Thus, our work supports the growing evidence that the alternating projection method can be useful for engineering applications.

In the process of showing the convergence rate for alternating projection method onto convex sets, we utilized \Cref{thm:KL to EB} that provides a way to compute the KL exponent $\theta$ through the error bound. It would be of interest to provide a similar approach for computing the KL exponent $\theta$ for \eqref{eq:min x-y} involving general nonconvex and nonsmooth sets. A potential approach is to connecting the KL exponent with other properties, like the transversality established in~\cite{drusvyatskiy2015transversality}. In addition, another interesting question would be whether it is possible to extend our analysis framework to general alternating minimizations.

\bibliographystyle{ieeetr}
%\pagenumbering{arabic}
\bibliography{Convergence}

\end{document}